\documentclass[12pt]{article}

\usepackage[a4paper,margin=2.45cm]{geometry}
\usepackage[T1]{fontenc}
\usepackage[utf8]{inputenc}
\usepackage[english]{babel}
\usepackage{lmodern}
\usepackage{microtype}
\usepackage{amsmath,amssymb,amsthm,mathtools}
\usepackage{graphicx}
\usepackage{booktabs}
\usepackage{array}
\usepackage{float}
\usepackage{caption}
\usepackage{xcolor}
\usepackage{algorithm2e}
\usepackage{enumitem}
\usepackage{hyperref}
\usepackage[nameinlink,noabbrev]{cleveref}

\hypersetup{
  colorlinks=true,
  linkcolor=blue!45!black,
  citecolor=blue!45!black,
  urlcolor=blue!55!black,
  pdftitle={An Efficient Algorithm for Estimating Prime Counts},
  pdfauthor={Artur Samojluk and Artur Siemaszko}
}

\numberwithin{equation}{section}
\newtheorem{theorem}{Theorem}[section]
\newtheorem{lemma}[theorem]{Lemma}
\newtheorem{proposition}[theorem]{Proposition}
\newtheorem{corollary}[theorem]{Corollary}
\theoremstyle{definition}
\newtheorem{definition}[theorem]{Definition}
\theoremstyle{remark}
\newtheorem{remark}[theorem]{Remark}

\newcommand{\N}{\mathbb N}

\newcommand{\li}{\operatorname{li}}
\newcommand{\dd}{\,\mathrm d}
\newcommand{\Mcal}{\mathcal M}
\newcommand{\bigO}{\mathcal O}

\title{An Efficient Algorithm for Estimating Prime Counts\\
\large Theoretical Normalization and a Logarithmic-Mean Criterion for the Riemann Hypothesis}

\author{
Artur Samojluk\\
University of Warmia and Mazury in Olsztyn, Poland\\
\texttt{artur.samojluk@uwm.edu.pl}
\and
Artur Siemaszko\\
University of Warmia and Mazury in Olsztyn, Poland\\
\texttt{artur@uwm.edu.pl}
}
\date{Version v7 -- August 2026}

\begin{document}
\maketitle

\begin{abstract}
We develop the Samojluk--Siemaszko (S--S) estimator for the prime-counting function $\pi(x)$ using a non-uniform partition generated by generalized triangular numbers. A cold-start evaluation uses $\bigO(\sqrt{x})$ local terms, whereas consecutive partition nodes can be processed with amortized $\bigO(1)$ update cost. Updated computations up to $10^{19}$, performed with the correction coefficient $c_T=0.7071$, show accuracy comparable with the Riemann approximation $R(x)$; the two estimators are also asymptotically equivalent at the level of their main term.

The correction is written as a one-parameter family $S_{\ell,c}(x)$. Finite-range experiments indicate that effective coefficients lie near $0.7$. We prove asymptotic formulas for the natural scale $q_\ell(x)$ and the accumulated discretization error $D_\ell(x)$, obtaining an unconditional transfer relation between the normalized S--S error and the classical normalized prime-number-theorem error. Together with the logarithmic-mean theorem under RH, this identifies the exact coefficient $c_{\mathrm{th}}=1/\sqrt2$ as uniquely asymptotically optimal in the logarithmic-mean centering sense. The converse implication is quoted from a companion manuscript in preparation.
\end{abstract}

\vspace{0.4cm}
\noindent\textbf{Keywords:} prime-counting function; generalized triangular numbers; incremental algorithms; prime number theorem; logarithmic mean; Riemann prime-counting function; Riemann hypothesis.

\section{Introduction}\label{sec:introduction}

The prime-counting function
\[
\pi(x)=\#\{p\leq x:p\text{ is prime}\}
\]
is a central object of analytic and computational number theory. The prime number theorem states that
\begin{equation}\label{eq:PNT}
\pi(x)\sim\li(x)\sim\frac{x}{\log x},
\qquad x\to\infty,
\end{equation}
where
\[
\li(x)=\operatorname{PV}\!\int_0^x\frac{\dd t}{\log t}
\]
is the logarithmic integral. Exact algorithms for $\pi(x)$ and highly accurate analytic approximations are well known. Nevertheless, applications involving a long sequence of increasing arguments motivate estimators that preserve previous computations and admit local updates.

In the spirit of computational investigations that reveal subtle regularities in prime data, such as the work of Lemke Oliver and Soundararajan on biases between consecutive primes \cite{OliverSoundararajan}, we use a structured discretization rather than an exact prime-counting procedure.

The method studied here uses the generalized triangular sequence
\[
L_n^{(\ell)}=\frac{n\bigl(\ell(n-1)+2\bigr)}2,
\qquad \ell\in\N,
\]
which contains the triangular numbers for $\ell=1$ and the squares for $\ell=2$. The sequence defines a structured non-uniform partition on which the integral of $1/\log t$ is discretized by right-endpoint contributions. The resulting estimator can be accumulated sequentially and corrected by one terminal term.

We call this construction the Samojluk--Siemaszko, or S--S, estimator. Its cold-start cost is $\bigO_\ell(\sqrt{x})$ local additions. This is asymptotically larger than the number of summands in a direct truncated evaluation of the Riemann function $R(x)$. The computational advantage of the S--S construction appears instead in sequential use: once the cumulative value at one partition node is known, the next node requires only one new local contribution, yielding amortized $\bigO(1)$ marginal cost. The standard direct formula for $R(x)$ does not possess this one-term recurrence and uses $\bigO(\log x)$ summands at each new argument.

The numerical version of the method contains a correction coefficient. Rather than assigning a special closed form to a finite-range calibration, we write the corrected family as
\[
S_{\ell,c}(x)=\widehat S_\ell(x)-c\,q_\ell(x).
\]
Experiments indicate that coefficients near $0.7$ are effective. The updated computations in this revision use the fixed decimal value
\[
c_T=0.7071,
\]
which is the four-decimal approximation of $1/\sqrt2$. The theoretical problem is nevertheless distinct: one asks which exact fixed coefficient removes the asymptotic logarithmic-mean bias of the normalized estimator. Under the limiting-mean consequence of RH, we show that this coefficient is uniquely
\[
c_{\mathrm{th}}=\frac1{\sqrt2}.
\]
Thus the numerical choice $c_T=0.7071$ and the theoretical identity $c_{\mathrm{th}}=1/\sqrt2$ are consistent, but the numerical table evaluates a specified decimal coefficient whereas the theorem concerns the exact constant.

A second objective is to place the S--S estimator relative to the classical Riemann approximation. We prove that for every fixed $\ell$ and every fixed correction coefficient $c$,
\[
S_{\ell,c}(x)\sim R(x)\sim\pi(x).
\]
Here asymptotic equivalence refers to the common principal term $x/\log x$; it does not assert equality or equivalence of the finer error terms. The numerical tables additionally show comparable finite-range errors for the tested powers of ten.

The principal contributions of this revision are:
\begin{enumerate}[label=(\roman*),leftmargin=2em]
\item a unified one-parameter formulation of the numerical and theoretical corrections;
\item rigorous asymptotics for the terminal scale $q_\ell(x)$ and the accumulated discretization error $D_\ell(x)$;
\item an unconditional pointwise and logarithmic-mean transfer relation between the normalized S--S error and the classical normalized prime-counting error;
\item a precise optimality statement identifying $1/\sqrt2$ as the unique logarithmic-centering coefficient;
\item a rigorous main-term comparison with $R(x)$ together with a clear separation between cold-start and sequential computational complexity.
\end{enumerate}

The implication from the Riemann hypothesis to the relevant logarithmic mean is recalled from the limiting-distribution literature. The converse direction is stated here without proof and attributed to the companion manuscript \emph{A Converse to Wintner's Theorem} \cite{SamojlukSiemaszkoConverse}.

\section{Generalized triangular partitions}\label{sec:partition}

\subsection{Background from primes in short intervals}

Sierpi\'nski proposed the conjecture that every interval between two consecutive triangular numbers contains a prime. The analogous statement for consecutive squares is Legendre's conjecture. Both involve intervals of length comparable with the square root of the right endpoint and therefore lie beyond presently known unconditional asymptotic results for primes in short intervals. The construction below does not prove either conjecture; it uses the same geometry only as a partition for approximating $\li(x)$.

For context, if $\varphi(x)=cx$ with $0<c<1$, the prime number theorem gives
\[
\pi(x)-\pi(x-\varphi(x))\sim\frac{\varphi(x)}{\log x}.
\]
Important work of Hoheisel, Huxley, Heath-Brown, and Baker--Harman--Pintz progressively shortened the intervals for which nontrivial information is known; see \cite{Hoheisel,Huxley,HeathBrown,BHP}. The lengths generated below are of order $x^{1/2}$.

\subsection{Nodes, cell lengths, and inverse indices}

Fix $\ell\in\N$. Define
\begin{equation}\label{eq:L-def}
L_n^{(\ell)}
 =\frac{n\bigl(\ell(n-1)+2\bigr)}2
 =\frac\ell2n^2+\frac{2-\ell}{2}n,
\qquad n\geq1.
\end{equation}
The corresponding cell length is
\begin{equation}\label{eq:h-def}
h_n^{(\ell)}
 :=L_{n+1}^{(\ell)}-L_n^{(\ell)}
 =1+\ell n.
\end{equation}
In particular,
\[
L_n^{(1)}=\frac{n(n+1)}2,
\qquad
L_n^{(2)}=n^2.
\]

The positive real solution of $L_t^{(\ell)}=x$ is
\begin{equation}\label{eq:nu-def}
\nu_\ell(x)
 :=\frac{\sqrt{8\ell x+(\ell-2)^2}+\ell-2}{2\ell}.
\end{equation}
Accordingly, define the floor and ceiling indices
\begin{equation}\label{eq:mn-def}
m_x^{(\ell)}=\lfloor\nu_\ell(x)\rfloor,
\qquad
n_x^{(\ell)}=\lceil\nu_\ell(x)\rceil.
\end{equation}
Thus
\[
L_{m_x^{(\ell)}}^{(\ell)}\leq x\leq L_{n_x^{(\ell)}}^{(\ell)},
\]
with equality of the two indices when $x$ is a partition node.

\subsection{The ceiling estimator and its natural scale}

For $j\geq1$, set
\begin{equation}\label{eq:s-def}
s_j^{(\ell)}
 :=\frac{h_j^{(\ell)}}{\log L_{j+1}^{(\ell)}},
\qquad
\Sigma_n^{(\ell)}:=\sum_{j=1}^{n-1}s_j^{(\ell)}.
\end{equation}
For $x\geq L_2^{(\ell)}$, the ceiling estimator is
\begin{equation}\label{eq:hatS-def}
\widehat S_\ell(x)
 :=\Sigma_{n_x^{(\ell)}}^{(\ell)}
   -\frac{L_{n_x^{(\ell)}}^{(\ell)}-x}
          {\log L_{n_x^{(\ell)}}^{(\ell)}}.
\end{equation}
If $n=n_x^{(\ell)}$ and $L_{n-1}^{(\ell)}<x\leq L_n^{(\ell)}$, then
\begin{equation}\label{eq:hatS-cell}
\widehat S_\ell(x)
 =\sum_{j=1}^{n-2}\frac{h_j^{(\ell)}}{\log L_{j+1}^{(\ell)}}
  +\frac{x-L_{n-1}^{(\ell)}}{\log L_n^{(\ell)}}.
\end{equation}

The terminal contribution determines the normalization
\begin{equation}\label{eq:q-def}
q_\ell(x)
 :=\frac{s_{n_x^{(\ell)}-1}^{(\ell)}}{\sqrt\ell}
 =\frac{h_{n_x^{(\ell)}-1}^{(\ell)}}
        {\sqrt\ell\,\log L_{n_x^{(\ell)}}^{(\ell)}}.
\end{equation}
We compare the normalized estimator error
\begin{equation}\label{eq:Z-def}
Z_\ell(x):=\frac{\widehat S_\ell(x)-\pi(x)}{q_\ell(x)}
\end{equation}
with the standard normalized prime-number-theorem error
\begin{equation}\label{eq:E-def}
E(x):=\frac{\log x}{\sqrt{x}}\bigl(\li(x)-\pi(x)\bigr).
\end{equation}

\section{Computational realization, calibration, and comparison with \texorpdfstring{$R(x)$}{R(x)}}\label{sec:algorithm}

For a cold-start evaluation, one computes $n=n_x^{(\ell)}$, accumulates the local terms in \eqref{eq:s-def}, and applies the boundary adjustment in \eqref{eq:hatS-def}. Since $n_x^{(\ell)}\asymp\sqrt{x/\ell}$, the number of additions is $\bigO_\ell(\sqrt{x})$. If the accumulator is retained and the argument advances from one partition node to the next, only one new local term is needed. The running implementation therefore uses amortized $\bigO(1)$ arithmetic operations per node in the unit-cost model and $\bigO(1)$ auxiliary state apart from the bit length of the represented numbers.

\subsection{The correction family}

\begin{definition}[Corrected S--S estimator]\label{def:S-family}
For a fixed coefficient $c\in\mathbb R$, define
\begin{equation}\label{eq:S-family-def}
S_{\ell,c}(x)
 :=\widehat S_\ell(x)-c\,q_\ell(x)
 =\widehat S_\ell(x)-\frac{c}{\sqrt\ell}\,
   s_{n_x^{(\ell)}-1}^{(\ell)}.
\end{equation}
\end{definition}

Finite-range coefficients may be selected numerically, and the available experiments indicate an effective region near $0.7$. For the updated computations reported in \cref{sec:numerics}, we fix
\begin{equation}\label{eq:c-num}
c_T:=0.7071.
\end{equation}
This is the value of $1/\sqrt2=0.707106781\ldots$ rounded to four decimal places. The numerical results should therefore be read as the performance of the explicitly specified coefficient $c_T$, not as a new proof that this decimal value uniquely minimizes a finite-sample loss function.

\begin{definition}[Reported triangular estimator]\label{def:reported-correction}
For the triangular partition $\ell=1$, define the estimator used in the numerical tables by
\begin{equation}\label{eq:ST-def}
\widetilde S_T(x):=S_{1,c_T}(x)=S_{1,0.7071}(x).
\end{equation}
\end{definition}

\begin{definition}[Theoretical correction]\label{def:theoretical-correction}
The asymptotically centered member of the family is
\begin{equation}\label{eq:Sth-def}
S_{\ell,\mathrm{th}}(x)
 :=S_{\ell,1/\sqrt2}(x)
 =\widehat S_\ell(x)-\frac1{\sqrt2}q_\ell(x)
 =\widehat S_\ell(x)-\frac{s_{n_x^{(\ell)}-1}^{(\ell)}}{\sqrt{2\ell}}.
\end{equation}
\end{definition}

The decimal coefficient $c_T$ and the exact theoretical coefficient answer related but not identical questions. The former specifies the computation reproduced in the tables; the latter is derived from the asymptotic relation between the natural scale of the prime-counting error and the terminal scale of the S--S estimator; see \cref{sec:coefficient,sec:RH}. Numerically,
\[
\frac1{\sqrt2}-c_T\approx6.7812\times10^{-6},
\]
so the distinction is negligible at the displayed precision but remains mathematically relevant.

For the triangular case $\ell=1$, a direct implementation of the one-parameter family is given in \cref{alg:triangular}.

\begin{algorithm}[H]
\DontPrintSemicolon
\KwIn{$x\geq3$ and a correction coefficient $c\in\mathbb R$}
\KwOut{$\widehat S_1(x)$ and $S_{1,c}(x)$}
$n\leftarrow\left\lceil(\sqrt{8x+1}-1)/2\right\rceil$\;
$\Sigma\leftarrow0$\;
\For{$j\leftarrow1$ \KwTo $n-1$}{
  $T\leftarrow(j+1)(j+2)/2$\;
  $\Sigma\leftarrow\Sigma+(j+1)/\log T$\;
}
$T_n\leftarrow n(n+1)/2$\;
$\widehat S_1(x)\leftarrow\Sigma-(T_n-x)/\log T_n$\;
$q_1(x)\leftarrow n/\log T_n$\;
$S_{1,c}(x)\leftarrow\widehat S_1(x)-c\,q_1(x)$\;
\caption{Cold-start evaluation of the triangular S--S estimator.}
\label{alg:triangular}
\end{algorithm}

\subsection{Comparison with the Riemann approximation}

The Riemann prime-counting function is
\begin{equation}\label{eq:R-def}
R(x)=\sum_{k=1}^{\infty}\frac{\mu(k)}{k}\,\li\!\left(x^{1/k}\right).
\end{equation}
A commonly used finite truncation retains $k\leq\lfloor\log x/\log2\rfloor$, and therefore processes $\bigO(\log x)$ summands in the same unit-cost comparison model. This statement concerns the number of retained terms, not a full bit-complexity analysis of the transcendental evaluations.

For one isolated large argument, the direct Riemann formula is therefore asymptotically cheaper than a cold-start S--S evaluation. The situation changes when estimates are required at consecutive partition nodes. At the nodes one has the exact recurrence
\begin{equation}\label{eq:incremental-recurrence}
\widehat S_\ell\!\left(L_{n+1}^{(\ell)}\right)
 =\widehat S_\ell\!\left(L_n^{(\ell)}\right)
  +\frac{h_n^{(\ell)}}{\log L_{n+1}^{(\ell)}}.
\end{equation}
Thus the next S--S estimate is obtained by one local update. By contrast, the standard direct evaluation of \eqref{eq:R-def} at a new argument again processes $\bigO(\log x)$ terms. The advantage claimed here is therefore a lower marginal cost in a sequential computation, not a lower cold-start complexity at a single argument.

The rigorous main-term equivalence
\[
S_{\ell,c}(x)\sim R(x)\sim\pi(x)
\]
is established in \cref{prop:R-equivalence}. It complements the finite-range comparison in \cref{sec:numerics}.

\section{Theoretical origin of the coefficient \texorpdfstring{$1/\sqrt2$}{1/sqrt(2)}}\label{sec:coefficient}

Throughout this section $\ell\geq1$ is fixed. All implied constants may depend on $\ell$ but not on $x$.

\subsection{Exact decomposition of the discretization error}

Let
\begin{equation}\label{eq:D-def}
D_\ell(x):=\li(x)-\widehat S_\ell(x).
\end{equation}
For $j\geq2$, define the complete-cell error
\begin{equation}\label{eq:dj-def}
d_j^{(\ell)}
 :=\int_{L_j^{(\ell)}}^{L_{j+1}^{(\ell)}}\frac{\dd t}{\log t}
   -\frac{h_j^{(\ell)}}{\log L_{j+1}^{(\ell)}}.
\end{equation}
If $n=n_x^{(\ell)}$ and $L_{n-1}^{(\ell)}<x\leq L_n^{(\ell)}$, define the terminal partial-cell error
\begin{equation}\label{eq:partial-d-def}
d_{n-1}^{(\ell)}(x)
 :=\int_{L_{n-1}^{(\ell)}}^x\frac{\dd t}{\log t}
   -\frac{x-L_{n-1}^{(\ell)}}{\log L_n^{(\ell)}}.
\end{equation}
Finally, put
\begin{equation}\label{eq:kappa-def}
\kappa_\ell
 :=\li\!\left(L_2^{(\ell)}\right)
  -\frac{h_1^{(\ell)}}{\log L_2^{(\ell)}}.
\end{equation}
Then, for $n=n_x^{(\ell)}$,
\begin{equation}\label{eq:D-exact}
D_\ell(x)
 =\kappa_\ell+\sum_{j=2}^{n-2}d_j^{(\ell)}+d_{n-1}^{(\ell)}(x),
\end{equation}
where an empty sum is interpreted as zero. This follows by partitioning the integral from $L_2^{(\ell)}$ to $x$ and comparing it term by term with \eqref{eq:hatS-cell}.

\subsection{Asymptotics of the denominator \texorpdfstring{$q_\ell(x)$}{q-ell(x)}}

\begin{lemma}[Natural terminal scale]\label{lem:q-asymptotic}
As $x\to\infty$,
\begin{equation}\label{eq:q-asymptotic}
q_\ell(x)
 =\frac{\sqrt{2x}}{\log x}
  \left(1+\bigO_\ell(x^{-1/2})\right).
\end{equation}
In particular, $q_\ell(x)\sim\sqrt{2x}/\log x$.
\end{lemma}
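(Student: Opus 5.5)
The plan is to compute the numerator and the denominator of $q_\ell(x)$ separately and then combine them. Write $n=n_x^{(\ell)}$. By \eqref{eq:q-def} and \eqref{eq:h-def},
\[
q_\ell(x)=\frac{1+\ell(n-1)}{\sqrt\ell\,\log L_n^{(\ell)}}.
\]
So it suffices to estimate $n$ and $\log L_n^{(\ell)}$ with relative accuracy $\bigO_\ell(x^{-1/2})$.

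\emph{Numerator.} Start from \eqref{eq:nu-def}. Expanding $\sqrt{8\ell x+(\ell-2)^2}=\sqrt{8\ell x}\,(1+\bigO_\ell(x^{-1}))$ gives
\[
\nu_\ell(x)=\sqrt{2x/\ell}+\bigO_\ell(1).
\]
Since $\nu_\ell(x)\le n<\nu_\ell(x)+1$, the same holds for $n$:
\[
n=\sqrt{2x/\ell}+\bigO_\ell(1).
\]
It follows that
\[
1+\ell(n-1)=\sqrt{2\ell x}+\bigO_\ell(1)=\sqrt{2\ell x}\,\bigl(1+\bigO_\ell(x^{-1/2})\bigr).
\]
Dividing by $\sqrt\ell$ yields $\sqrt{2x}\,(1+\bigO_\ell(x^{-1/2}))$.

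\emph{Denominator.} We have $x\le L_n^{(\ell)}<L_{n+1}^{(\ell)}$, and $L_n^{(\ell)}-x\le h_{n-1}^{(\ell)}=1+\ell(n-1)=\bigO_\ell(\sqrt x)$. Hence
\[
L_n^{(\ell)}=x\bigl(1+\bigO_\ell(x^{-1/2})\bigr),
\]
and taking logarithms,
\[
\log L_n^{(\ell)}=\log x+\bigO_\ell(x^{-1/2})=\log x\,\bigl(1+\bigO_\ell(x^{-1/2}/\log x)\bigr).
\]
This relative error is even smaller than $x^{-1/2}$.

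\emph{Combination.} Take the quotient of the two estimates. Since $(1+\bigO(\varepsilon))^{-1}=1+\bigO(\varepsilon)$ for small $\varepsilon$, this gives \eqref{eq:q-asymptotic}, and the asymptotic equivalence $q_\ell(x)\sim\sqrt{2x}/\log x$ follows immediately.

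The only point needing care is the numerator. The error must be kept at the absolute level $\bigO_\ell(1)$ in $n$, not merely $o(\sqrt x)$, to get the stated relative rate $x^{-1/2}$. This is why the explicit formula \eqref{eq:nu-def} and the one-unit gap between the ceiling and $\nu_\ell(x)$ are used. All implied constants depend only on $\ell$, and the statement is only required for $x$ large, say $x\ge L_2^{(\ell)}$ with $x$ large enough that $\log x\ge1$.
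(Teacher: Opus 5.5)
Your proof is correct and follows essentially the same route as the paper's: pin $n$ to within $\bigO_\ell(1)$ of $\sqrt{2x/\ell}$, deduce $h_{n-1}^{(\ell)}/\sqrt\ell=\sqrt{2x}+\bigO_\ell(1)$ and $\log L_n^{(\ell)}=\log x+\bigO_\ell(x^{-1/2})$, then take the quotient. The only difference is cosmetic: you read off $n=\sqrt{2x/\ell}+\bigO_\ell(1)$ from the closed form \eqref{eq:nu-def} instead of inverting $x=\tfrac\ell2 n^2+\bigO_\ell(n)$. One small point: the bound $L_n^{(\ell)}-x\le h_{n-1}^{(\ell)}$ rests on $L_{n-1}^{(\ell)}<x$ (from $n-1<\nu_\ell(x)$), so that is the inequality to state, rather than $L_n^{(\ell)}<L_{n+1}^{(\ell)}$.
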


\begin{proof}
Let $n=n_x^{(\ell)}$. Since
$L_{n-1}^{(\ell)}<x\leq L_n^{(\ell)}$ and
$L_n^{(\ell)}-L_{n-1}^{(\ell)}=h_{n-1}^{(\ell)}=\bigO_\ell(n)$, we have
\[
x=L_n^{(\ell)}+\bigO_\ell(n)
  =\frac\ell2n^2+\bigO_\ell(n).
\]
Solving this relation for $n$ gives
\begin{equation}\label{eq:n-asymptotic}
n=\sqrt{\frac{2x}{\ell}}+\bigO_\ell(1).
\end{equation}
Therefore
\begin{equation}\label{eq:h-terminal}
\frac{h_{n-1}^{(\ell)}}{\sqrt\ell}
 =\frac{\ell n+1-\ell}{\sqrt\ell}
 =\sqrt{2x}+\bigO_\ell(1).
\end{equation}
Moreover,
\[
L_n^{(\ell)}=x+\bigO_\ell(\sqrt{x}),
\]
so the mean-value theorem gives
\begin{equation}\label{eq:log-terminal}
\log L_n^{(\ell)}=\log x+\bigO_\ell(x^{-1/2}).
\end{equation}
Substitution of \eqref{eq:h-terminal} and \eqref{eq:log-terminal} into \eqref{eq:q-def} proves \eqref{eq:q-asymptotic}.
\end{proof}

\subsection{Two-sided cell estimates}

The lower estimate in the following lemma is essential. The statement $d_j^{(\ell)}\geq0$ alone gives only an upper bound for $D_\ell(x)$ and cannot justify its asymptotic main term.

\begin{lemma}[Exact cell identity and two-sided bounds]\label{lem:cell-error}
For every $j\geq2$,
\begin{equation}\label{eq:cell-identity}
d_j^{(\ell)}
 =\int_{L_j^{(\ell)}}^{L_{j+1}^{(\ell)}}
   \frac{u-L_j^{(\ell)}}{u\log^2u}\,\dd u.
\end{equation}
Consequently,
\begin{equation}\label{eq:cell-two-sided}
\frac{\bigl(h_j^{(\ell)}\bigr)^2}
     {2L_{j+1}^{(\ell)}\log^2L_{j+1}^{(\ell)}}
\leq d_j^{(\ell)}\leq
\frac{\bigl(h_j^{(\ell)}\bigr)^2}
     {2L_j^{(\ell)}\log^2L_j^{(\ell)}}.
\end{equation}
In particular,
\begin{equation}\label{eq:dj-asymptotic}
d_j^{(\ell)}
 =\frac{\ell}{\log^2L_{j+1}^{(\ell)}}
  +\bigO_\ell\!\left(\frac1{j\log^2(j+1)}\right).
\end{equation}
\end{lemma}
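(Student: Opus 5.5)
Write $a=L_j^{(\ell)}$, $b=L_{j+1}^{(\ell)}$, $h=h_j^{(\ell)}=b-a$. The cell identity \eqref{eq:cell-identity} will come from integration by parts on $\int_a^b \dd t/\log t$. Take $u-a$ as the antiderivative of $1$, chosen so that it vanishes at the left endpoint. Since $(1/\log u)'=-1/(u\log^2u)$, this gives
\[
\int_a^b\frac{\dd u}{\log u}
 =\frac{b-a}{\log b}+\int_a^b\frac{u-a}{u\log^2u}\,\dd u .
\]
Subtracting $h/\log b$ then yields \eqref{eq:cell-identity} directly from \eqref{eq:dj-def}. The requirement $j\ge2$ only ensures $a>1$, so that $\log u>0$ on the cell; since $L_2^{(\ell)}=\ell+2\ge3$, this holds.

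For the two-sided bound \eqref{eq:cell-two-sided}, note that $u\log^2u$ is increasing for $u>1$. Hence on $[a,b]$ it is at least $a\log^2a$ and at most $b\log^2b$. Since $\int_a^b(u-a)\,\dd u=h^2/2$, bounding the denominator above and below gives both inequalities at once.

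For \eqref{eq:dj-asymptotic}, I compare $h^2/(2b)$ with $\ell$. By \eqref{eq:L-def} and \eqref{eq:h-def}, $h^2=\ell^2j^2+2\ell j+1$ and $2b=\ell(j+1)^2+(2-\ell)(j+1)=\ell j^2+(\ell+2)j+2$. Hence
\[
\frac{h^2}{2b}-\ell=\frac{(2\ell-\ell^2-2\ell)j+1-2\ell}{2b}=\bigO_\ell(1/j).
\]
The lower bound in \eqref{eq:cell-two-sided} therefore equals $\ell/\log^2b+\bigO_\ell(1/(j\log^2 b))$.

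For the upper bound I must also compare $a\log^2a$ with $b\log^2b$. We have $b/a=1+\bigO_\ell(1/j)$, so $\log b-\log a=\bigO_\ell(1/j)$ and hence $\log^2b/\log^2a=1+\bigO_\ell(1/(j\log j))$. It follows that the upper bound is also $\ell/\log^2b\,(1+\bigO_\ell(1/j))$. Since $d_j^{(\ell)}$ is squeezed between the two bounds, \eqref{eq:dj-asymptotic} follows.

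Finally, I convert $\log^2 L_{j+1}^{(\ell)}$ to $\log^2(j+1)$ in the error term, using $L_{j+1}^{(\ell)}\ge j+1$ so that $\log L_{j+1}^{(\ell)}\ge\log(j+1)$. The only mildly delicate point is keeping all the $\ell$-dependence inside the implied constants uniformly in $j\ge2$. This works because both numerator and denominator in each comparison are explicit polynomials in $j$.
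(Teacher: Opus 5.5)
Your proposal is correct and is essentially the paper's argument. Integration by parts with the antiderivative $u-a$ is the one-variable form of the paper's Tonelli interchange applied to $1/\log t-1/\log b=\int_t^b \dd u/(u\log^2u)$, and your monotonicity squeeze, explicit identity $h^2-2\ell b=-\ell^2j+1-2\ell$, and bound $L_{j+1}^{(\ell)}\geq j+1$ simply make concrete the paper's $1+\bigO_\ell(j^{-1})$ bookkeeping and its passage to $\log^2(j+1)$.
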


\begin{proof}
Put $a=L_j^{(\ell)}$ and $b=L_{j+1}^{(\ell)}$. Since
\[
\frac1{\log t}-\frac1{\log b}
 =\int_t^b\frac{\dd u}{u\log^2u},
\]
Tonelli's theorem, applied to the nonnegative integrand, gives
\begin{align*}
d_j^{(\ell)}
 &=\int_a^b\left(\frac1{\log t}-\frac1{\log b}\right)\dd t\\
 &=\int_a^b\int_t^b\frac{\dd u}{u\log^2u}\,\dd t\\
 &=\int_a^b\frac{u-a}{u\log^2u}\,\dd u,
\end{align*}
which is \eqref{eq:cell-identity}. The function $u\mapsto1/(u\log^2u)$ is decreasing for $u>1$. Hence
\[
\frac1{b\log^2b}\int_a^b(u-a)\dd u
\leq d_j^{(\ell)}\leq
\frac1{a\log^2a}\int_a^b(u-a)\dd u.
\]
Since $\int_a^b(u-a)\dd u=(b-a)^2/2$, this proves \eqref{eq:cell-two-sided}.

From \eqref{eq:L-def} and \eqref{eq:h-def},
\[
L_j^{(\ell)}=\frac\ell2j^2\left(1+\bigO_\ell(j^{-1})\right),
\qquad
h_j^{(\ell)}=\ell j\left(1+\bigO_\ell(j^{-1})\right).
\]
Also,
\[
\frac{L_{j+1}^{(\ell)}}{L_j^{(\ell)}}=1+\bigO_\ell(j^{-1}),
\qquad
\frac{\log L_{j+1}^{(\ell)}}{\log L_j^{(\ell)}}
 =1+\bigO_\ell\!\left(\frac1{j\log j}\right).
\]
Thus both bounds in \eqref{eq:cell-two-sided} equal
\[
\frac{\ell}{\log^2L_{j+1}^{(\ell)}}
\left(1+\bigO_\ell(j^{-1})\right),
\]
which proves \eqref{eq:dj-asymptotic}.
\end{proof}

\begin{lemma}[Terminal partial cell]\label{lem:partial-cell}
If $n=n_x^{(\ell)}$, then
\begin{equation}\label{eq:partial-bound}
0\leq d_{n-1}^{(\ell)}(x)
\leq d_{n-1}^{(\ell)}
=\bigO_\ell\!\left(\frac1{\log^2x}\right).
\end{equation}
\end{lemma}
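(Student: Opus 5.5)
The plan is to treat the partial cell exactly as the complete cell in \cref{lem:cell-error}: first derive an integral representation, then read off monotonicity in $x$, and finally use the complete-cell bound. Put $a=L_{n-1}^{(\ell)}$ and $b=L_n^{(\ell)}$, so that $a<x\le b$. Writing
\[
\frac1{\log t}-\frac1{\log b}=\int_t^b\frac{\dd u}{u\log^2u},
\]
the definition \eqref{eq:partial-d-def} becomes
\[
d_{n-1}^{(\ell)}(x)=\int_a^x\left(\frac1{\log t}-\frac1{\log b}\right)\dd t .
\]
The integrand is nonnegative for $t\in[a,b]$, because $\log t\le\log b$ and $a\ge L_1^{(\ell)}=1$ (indeed $a>1$ in the range $x\ge L_2^{(\ell)}$ once $n\ge3$). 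This gives $d_{n-1}^{(\ell)}(x)\ge0$ at once.

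For the middle inequality I will view $F(y)=\int_a^y(1/\log t-1/\log b)\dd t$ as a nondecreasing function of $y\in[a,b]$, since its integrand is nonnegative. Hence $F(x)\le F(b)$. The value $F(b)$ is exactly the complete-cell error $d_{n-1}^{(\ell)}$ from \eqref{eq:dj-def} with $j=n-1$, because $h_{n-1}^{(\ell)}=b-a$ and the terminal log is $\log L_n^{(\ell)}$ in both definitions. One must check that $j=n-1\ge2$, which holds for $x$ large (and small $x$ can be absorbed into the implied constant).

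For the final $\bigO_\ell$ bound I will apply the upper half of \eqref{eq:cell-two-sided} with $j=n-1$:
\[
d_{n-1}^{(\ell)}\le\frac{(h_{n-1}^{(\ell)})^2}{2L_{n-1}^{(\ell)}\log^2L_{n-1}^{(\ell)}} .
\]
By \eqref{eq:n-asymptotic}, $h_{n-1}^{(\ell)}=\bigO_\ell(\sqrt x)$ and $L_{n-1}^{(\ell)}=x+\bigO_\ell(\sqrt x)\asymp x$, so $\log L_{n-1}^{(\ell)}=\log x+o(1)\ge\tfrac12\log x$ for large $x$. The ratio is therefore $\bigO_\ell(1/\log^2x)$. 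Alternatively, \eqref{eq:dj-asymptotic} gives $d_{n-1}^{(\ell)}\sim\ell/\log^2 L_n^{(\ell)}$ directly.

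No step is a real obstacle. The only care needed is bookkeeping at the boundary: the case $x=L_n^{(\ell)}$, where the partial cell coincides with the full cell, and the small-$n$ cases where $j=n-1<2$ or the logarithm near $L_1^{(\ell)}=1$ degenerates. These are handled by restricting to $x$ large and absorbing finitely many cases into the constant.
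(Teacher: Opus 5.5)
Your proposal is correct and follows the paper's proof essentially step for step. Nonnegativity of $1/\log t-1/\log L_n^{(\ell)}$ on the terminal cell gives $d_{n-1}^{(\ell)}(x)\geq0$; extending the upper limit from $x$ to $L_n^{(\ell)}$ gives the comparison with the complete-cell error; and the upper half of \eqref{eq:cell-two-sided}, together with $h_{n-1}^{(\ell)}=\bigO_\ell(\sqrt{x})$ and $L_{n-1}^{(\ell)}\asymp_\ell x$, yields $\bigO_\ell(1/\log^2x)$ (your extra care about the degenerate case $n=2$, where $L_{n-1}^{(\ell)}=1$ and $d_1^{(\ell)}$ is undefined, is a sensible point the paper leaves implicit).
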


\begin{proof}
For $L_{n-1}^{(\ell)}\leq t\leq x\leq L_n^{(\ell)}$,
\[
\frac1{\log t}-\frac1{\log L_n^{(\ell)}}\geq0.
\]
Integrating over $[L_{n-1}^{(\ell)},x]$ proves nonnegativity. Extending the interval of integration to $L_n^{(\ell)}$ gives the upper bound by the complete-cell error. The final estimate follows from \eqref{eq:cell-two-sided}, \eqref{eq:n-asymptotic}, and $L_{n-1}^{(\ell)}\asymp_\ell x$.
\end{proof}

\subsection{A slowly varying logarithmic sum}

\begin{lemma}[Summation lemma]\label{lem:slow-sum}
As $N\to\infty$,
\begin{equation}\label{eq:slow-sum}
\sum_{j=2}^{N}\frac1{\log^2L_{j+1}^{(\ell)}}
 =\frac{N}{\log^2L_{N+1}^{(\ell)}}
  +\bigO_\ell\!\left(\frac{N}{\log^3N}\right).
\end{equation}
\end{lemma}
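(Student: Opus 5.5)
We compare the sum with an integral, or equivalently use summation by parts against the counting function of the index. Put
\[
f(t):=\frac1{\log^2L_{t+1}^{(\ell)}},
\]
where $L_{t+1}^{(\ell)}=\frac\ell2(t+1)^2+\frac{2-\ell}2(t+1)$ is extended to real $t\ge 2$. It is positive, increasing and of order $t^2$, so $f$ is positive and decreasing on $[2,\infty)$. Abel summation with the weights $a_j=1$ gives
\[
\sum_{j=2}^N f(j)=(N-1)f(N)-\int_2^N(\lfloor t\rfloor-1)f'(t)\dd t .
\]
Since $f$ is monotone, replacing $\lfloor t\rfloor-1$ by $t$ changes the integral by at most $2\int_2^N|f'|=2(f(2)-f(N))=\bigO_\ell(1)$. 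Likewise $(N-1)f(N)=Nf(N)+\bigO(1)$. So it remains to bound $\int_2^N t\,|f'(t)|\dd t$.

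Differentiating,
\[
f'(t)=-\frac{2}{\log^3L_{t+1}^{(\ell)}}\cdot\frac{\frac{\dd}{\dd t}L_{t+1}^{(\ell)}}{L_{t+1}^{(\ell)}}.
\]
The logarithmic derivative is $\frac{\ell(t+1)+\frac{2-\ell}2}{L_{t+1}^{(\ell)}}=\frac{2}{t}\bigl(1+\bigO_\ell(t^{-1})\bigr)$. Hence $t|f'(t)|\ll_\ell 1/\log^3 L_{t+1}^{(\ell)}\ll 1/\log^3(t+1)$, using $\log L_{t+1}^{(\ell)}\ge \log(t+1)$ for large $t$ (since $L_{t+1}^{(\ell)}\gg t^2$). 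Therefore
\[
\int_2^N t|f'(t)|\dd t\ll_\ell\int_2^N\frac{\dd t}{\log^3(t+1)}\ll\frac{N}{\log^3N}.
\]
The last bound follows by splitting at $\sqrt N$: the part $[2,\sqrt N]$ contributes at most $\sqrt N$, and on $[\sqrt N,N]$ we have $\log t\ge\frac12\log N$, so that part contributes $\ll N/\log^3N$.

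Collecting the pieces,
\[
\sum_{j=2}^N f(j)=\frac{N}{\log^2L_{N+1}^{(\ell)}}+\bigO_\ell\!\left(\frac N{\log^3N}\right)+\bigO_\ell(1),
\]
and the $\bigO(1)$ term is absorbed into the stated error. This is exactly \eqref{eq:slow-sum}.

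The only mildly delicate step is the integral $\int^N dt/\log^3 t$. The naive bound $\int_2^N\dd t/\log^3 t\le N/\log^3 2$ loses the logarithmic saving, which is why the split at $\sqrt N$ (or an integration by parts giving $\mathrm{li}$-type asymptotics) is needed. Everything else is routine bookkeeping, with constants depending only on $\ell$.
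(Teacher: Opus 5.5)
Your proof is correct and follows essentially the paper's argument. Your Abel summation with $A(t)=\lfloor t\rfloor-1$ is the paper's monotone integral comparison combined with its integration by parts $\int_2^N f=Nf(N)-2f(2)-\int_2^N t f'(t)\dd t$, and the key bound $t\lvert f'(t)\rvert\ll_\ell 1/\log^3(t+1)$ followed by the split at $\sqrt N$ is exactly what the paper uses.
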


\begin{proof}
For real $t\geq2$, let
\[
f(t)=\frac1{\log^2L_{t+1}^{(\ell)}},
\]
where $L_t^{(\ell)}$ is the polynomial in \eqref{eq:L-def}. The function $f$ is positive and decreasing, and
\[
f'(t)
 =-\frac{2(L_{t+1}^{(\ell)})'}
          {L_{t+1}^{(\ell)}\log^3L_{t+1}^{(\ell)}}
 =\bigO_\ell\!\left(\frac1{t\log^3(t+1)}\right).
\]
Monotone integral comparison gives
\[
\sum_{j=2}^{N}f(j)=\int_2^N f(t)\dd t+\bigO_\ell(1).
\]
Integration by parts yields
\begin{align*}
\int_2^N f(t)\dd t
 &=Nf(N)-2f(2)-\int_2^N t f'(t)\dd t\\
 &=Nf(N)+\bigO_\ell\!\left(
      1+\int_2^N\frac{\dd t}{\log^3(t+1)}
    \right).
\end{align*}
The last integral is $\bigO(N/\log^3N)$, for example by splitting $[2,N]$ into $[2,\sqrt N]$ and $[\sqrt N,N]$. This proves \eqref{eq:slow-sum}.
\end{proof}

\subsection{Asymptotics of the accumulated error \texorpdfstring{$D_\ell(x)$}{D-ell(x)}}

\begin{theorem}[Accumulated discretization error]\label{thm:D-asymptotic}
For every fixed $\ell\geq1$,
\begin{equation}\label{eq:D-asymptotic}
D_\ell(x)
 =\frac{\sqrt{2\ell x}}{\log^2x}
  +\bigO_\ell\!\left(\frac{\sqrt{x}}{\log^3x}\right),
\qquad x\to\infty.
\end{equation}
\end{theorem}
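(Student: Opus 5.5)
We start from the exact decomposition \eqref{eq:D-exact} with $n=n_x^{(\ell)}$:
\[
D_\ell(x)=\kappa_\ell+\sum_{j=2}^{n-2}d_j^{(\ell)}+d_{n-1}^{(\ell)}(x).
\]
The constant $\kappa_\ell$ is $\bigO_\ell(1)$. By \cref{lem:partial-cell}, the terminal partial cell contributes $\bigO_\ell(1/\log^2x)$. Both are absorbed in the error term $\bigO_\ell(\sqrt{x}/\log^3x)$. The main step is therefore to evaluate the sum of complete-cell errors.

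For this we insert the cell asymptotic \eqref{eq:dj-asymptotic} term by term:
\[
\sum_{j=2}^{n-2}d_j^{(\ell)}
 =\ell\sum_{j=2}^{n-2}\frac1{\log^2L_{j+1}^{(\ell)}}
  +\bigO_\ell\!\Bigl(\sum_{j=2}^{n}\frac1{j\log^2(j+1)}\Bigr).
\]
The second sum converges, so it is $\bigO_\ell(1)$. To the first sum we apply \cref{lem:slow-sum} with $N=n-2$. This gives
\[
\ell\,\frac{n-2}{\log^2L_{n-1}^{(\ell)}}+\bigO_\ell\!\Bigl(\frac{n}{\log^3 n}\Bigr).
\]
Since $n\asymp_\ell\sqrt{x}$ and $\log n\asymp\log x$, the error is $\bigO_\ell(\sqrt{x}/\log^3x)$.

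It remains to express the main term in terms of $x$. By \eqref{eq:n-asymptotic}, $n-2=\sqrt{2x/\ell}+\bigO_\ell(1)$. Hence $\ell(n-2)=\sqrt{2\ell x}+\bigO_\ell(1)$. Also $L_{n-1}^{(\ell)}=x+\bigO_\ell(\sqrt x)$, and as in \eqref{eq:log-terminal} this gives $\log L_{n-1}^{(\ell)}=\log x+\bigO_\ell(x^{-1/2})$. Therefore
\[
\frac{\ell(n-2)}{\log^2L_{n-1}^{(\ell)}}=\frac{\sqrt{2\ell x}}{\log^2x}\bigl(1+\bigO_\ell(x^{-1/2})\bigr),
\]
and the relative error here is far smaller than required. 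Collecting all pieces yields \eqref{eq:D-asymptotic}.

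The step needing the most care is matching the index range when invoking \cref{lem:slow-sum}. The lemma sums up to $N$ with $\log L_{N+1}$, whereas our sum runs to $n-2$, so we take $N=n-2$ exactly. We must also check that $n\ge4$ for large $x$, so the range is nonempty. Beyond that, the loss in \cref{lem:slow-sum} is already of size $N/\log^3N$, which exactly sets the final error, so no sharper estimate is needed anywhere. The argument is essentially bookkeeping once the two-sided cell bounds of \cref{lem:cell-error} are available.
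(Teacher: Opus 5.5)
Your proposal is correct and follows the paper's proof: you start from the decomposition \eqref{eq:D-exact}, insert \eqref{eq:dj-asymptotic} termwise and absorb the convergent sum $\sum_j 1/(j\log^2(j+1))$ into $\bigO_\ell(1)$, apply \cref{lem:slow-sum}, and convert to $x$ via \eqref{eq:n-asymptotic} and \eqref{eq:log-terminal}. The only difference is cosmetic: you take $N=n-2$ exactly and convert $\ell(n-2)/\log^2L_{n-1}^{(\ell)}$ directly, whereas the paper writes the main term as $\ell n/\log^2L_n^{(\ell)}$ and remarks that the index shifts affect only the error term.
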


\begin{proof}
Let $n=n_x^{(\ell)}$. Combining \eqref{eq:D-exact}, \eqref{eq:dj-asymptotic}, and \cref{lem:partial-cell}, we obtain
\begin{align}
D_\ell(x)
 &=\ell\sum_{j=2}^{n-2}\frac1{\log^2L_{j+1}^{(\ell)}}
   +\bigO_\ell\!\left(
       1+\sum_{j=2}^{n-2}\frac1{j\log^2(j+1)}
       +\frac1{\log^2x}
     \right)\notag\\
 &=\ell\sum_{j=2}^{n-2}\frac1{\log^2L_{j+1}^{(\ell)}}
   +\bigO_\ell(1),
\label{eq:D-sum-stage}
\end{align}
because $\sum_{j\geq2}1/(j\log^2(j+1))$ converges. By \cref{lem:slow-sum},
\begin{equation}\label{eq:D-sum-stage2}
D_\ell(x)
 =\frac{\ell n}{\log^2L_n^{(\ell)}}
  +\bigO_\ell\!\left(\frac{n}{\log^3n}\right).
\end{equation}
Changing $n$ to $n-2$ or $L_n$ to $L_{n-1}$ changes only the error term. By \eqref{eq:n-asymptotic},
\[
\ell n=\sqrt{2\ell x}+\bigO_\ell(1),
\]
and by \eqref{eq:log-terminal},
\[
\log L_n^{(\ell)}=\log x+\bigO_\ell(x^{-1/2}).
\]
Substitution into \eqref{eq:D-sum-stage2} gives \eqref{eq:D-asymptotic}.
\end{proof}

\subsection{Asymptotic equivalence with the Riemann approximation}

\begin{proposition}[Main-term equivalence with $R(x)$]\label{prop:R-equivalence}
For every fixed $\ell\geq1$ and every fixed $c\in\mathbb R$,
\begin{equation}\label{eq:R-equivalence}
S_{\ell,c}(x)\sim\widehat S_\ell(x)\sim\li(x)\sim\pi(x)\sim R(x),
\qquad x\to\infty.
\end{equation}
In particular,
\begin{equation}\label{eq:S-over-R}
\lim_{x\to\infty}\frac{S_{\ell,c}(x)}{R(x)}=1.
\end{equation}
\end{proposition}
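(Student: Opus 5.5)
We establish the chain of equivalences link by link, all through the common principal term $x/\log x$. The key point is that each difference we meet is $o(x/\log x)$. Asymptotic equivalence is transitive, so the chain and \eqref{eq:S-over-R} then follow at once.

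\emph{Step 1: $\widehat S_\ell(x)\sim\li(x)$.} By definition \eqref{eq:D-def}, $\widehat S_\ell(x)=\li(x)-D_\ell(x)$. \Cref{thm:D-asymptotic} gives $D_\ell(x)=\bigO_\ell(\sqrt{x}/\log^2x)$. Moreover, $\li(x)\sim x/\log x$ by \eqref{eq:PNT}, or directly by integration by parts. Hence $D_\ell(x)/\li(x)=\bigO_\ell(x^{-1/2}/\log x)\to0$.

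\emph{Step 2: $S_{\ell,c}(x)\sim\widehat S_\ell(x)$.} By \cref{def:S-family}, $S_{\ell,c}(x)-\widehat S_\ell(x)=-c\,q_\ell(x)$. \Cref{lem:q-asymptotic} gives $q_\ell(x)\sim\sqrt{2x}/\log x$. Dividing by $\widehat S_\ell(x)\sim x/\log x$ from Step 1 gives a ratio of order $|c|\sqrt{2}\,x^{-1/2}\to0$. Since $c$ and $\ell$ are fixed, this suffices.

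\emph{Step 3: $\li(x)\sim\pi(x)$.} This is the prime number theorem \eqref{eq:PNT}, which we quote.

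\emph{Step 4: $R(x)\sim\li(x)$.} This is the only step needing some care, because \eqref{eq:R-def} is an infinite series. We would write $R(x)-\li(x)=\sum_{k\ge2}\frac{\mu(k)}{k}\li(x^{1/k})$ and bound it crudely.
\begin{itemize}
\item For $2\le k\le \log x/\log 2$ we have $x^{1/k}\ge2$, and $|\li(y)|\le y+\bigO(1)$ for $y\ge2$. The contribution of these terms is therefore at most $\sum_{2\le k\le\log x/\log2}\bigl(x^{1/2}+\bigO(1)\bigr)/k=\bigO(\sqrt{x}\log\log x)$.
\item For $k>\log x/\log2$ we have $1<x^{1/k}<2$, and $\li(y)$ behaves like $\log\log y+\gamma+\bigO(\log y)$ near $y=1^+$. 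The logarithmic singularity at $y=1$ is the main obstacle, since it makes the tail only conditionally well behaved. The standard resolution is to use the Gram series form of $R$, $R(x)=1+\sum_{m\ge1}(\log x)^m/(m\,m!\,\zeta(m+1))$, or equivalently the identity $\sum_k\mu(k)/k=0$ together with the expansion of $\li(e^{u/k})$. With either, the tail is $\bigO(\log\log x)$, in particular $o(x/\log x)$.
\end{itemize}
If one prefers to avoid this, one may note that the convergence and size of the tail of \eqref{eq:R-def} are classical facts about $R$, and quote $R(x)=\li(x)+\bigO(\sqrt{x}\log\log x)$. Then $R(x)/\li(x)\to1$.

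Chaining Steps 1--4 gives \eqref{eq:R-equivalence}. Since $R(x)\to\infty$, it is nonzero for large $x$, so the ratio in \eqref{eq:S-over-R} tends to $1$.
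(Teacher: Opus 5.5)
Your proof is correct and is essentially the paper's argument: Steps 1--3 use the same inputs as the paper (\cref{thm:D-asymptotic} for $D_\ell$, \cref{lem:q-asymptotic} for $q_\ell$, and the prime number theorem) to show that each difference is $o(x/\log x)$. The only difference is Step 4, where the paper simply cites Titchmarsh for $R(x)\sim x/\log x$; your sketch of $R(x)-\li(x)=\bigO(\sqrt{x}\log\log x)$ works via the Gram series (since $0\le\li(e^u)-\gamma-\log u\le e^u-1$), but the M\"obius-series variant also needs convergence of $\sum_k\mu(k)\log k/k$ (to $-1$), not merely $\sum_k\mu(k)/k=0$, to control the $-\log k$ part of $\log\log x^{1/k}$.
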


\begin{proof}
By \cref{thm:D-asymptotic},
\[
\li(x)-\widehat S_\ell(x)
 =\bigO_\ell\!\left(\frac{\sqrt{x}}{\log^2x}\right)
 =o\!\left(\frac{x}{\log x}\right).
\]
Hence $\widehat S_\ell(x)\sim\li(x)$. By \cref{lem:q-asymptotic},
\[
c\,q_\ell(x)
 =\bigO_{\ell,c}\!\left(\frac{\sqrt{x}}{\log x}\right)
 =o\!\left(\frac{x}{\log x}\right),
\]
so $S_{\ell,c}(x)\sim\widehat S_\ell(x)$. The prime number theorem gives $\li(x)\sim\pi(x)$, while the classical Riemann function satisfies $R(x)\sim x/\log x\sim\pi(x)$; see, for example, \cite{Titchmarsh}. Combining these relations proves \eqref{eq:R-equivalence} and \eqref{eq:S-over-R}.
\end{proof}

\subsection{The unconditional transfer theorem}

\begin{theorem}[Unconditional relation between $Z_\ell$ and $E$]\label{thm:transfer}
For every fixed $\ell\geq1$,
\begin{equation}\label{eq:transfer-precise}
Z_\ell(x)
 =\left(\frac1{\sqrt2}+\bigO_\ell(x^{-1/2})\right)E(x)
  -\frac{\sqrt\ell}{\log x}
  +\bigO_\ell\!\left(\frac1{\log^2x}\right).
\end{equation}
Using the classical zero-free-region remainder in the prime number theorem, this simplifies unconditionally to
\begin{equation}\label{eq:transfer-simple}
\boxed{
Z_\ell(x)
 =\frac1{\sqrt2}E(x)
  -\frac{\sqrt\ell}{\log x}
  +\bigO_\ell\!\left(\frac1{\log^2x}\right)
}
\qquad (x\to\infty).
\end{equation}
\end{theorem}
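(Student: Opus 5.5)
The plan is to write the numerator of $Z_\ell$ as a prime-number-theorem error plus a discretization error, then divide by $q_\ell(x)$. By \eqref{eq:D-def},
\[
\widehat S_\ell(x)-\pi(x)=\bigl(\li(x)-\pi(x)\bigr)-D_\ell(x)
=\frac{\sqrt{x}}{\log x}E(x)-D_\ell(x),
\]
and therefore
\[
Z_\ell(x)=\frac{\sqrt{x}}{q_\ell(x)\log x}\,E(x)-\frac{D_\ell(x)}{q_\ell(x)}.
\]
The two quotients are then handled separately.

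For the first quotient, \cref{lem:q-asymptotic} gives $q_\ell(x)=\frac{\sqrt{2x}}{\log x}\bigl(1+\bigO_\ell(x^{-1/2})\bigr)$. Hence
\[
\frac{\sqrt{x}}{q_\ell(x)\log x}=\frac1{\sqrt2}\bigl(1+\bigO_\ell(x^{-1/2})\bigr)^{-1}=\frac1{\sqrt2}+\bigO_\ell(x^{-1/2}),
\]
which is exactly the coefficient of $E(x)$ in \eqref{eq:transfer-precise}.

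For the second quotient, \cref{thm:D-asymptotic} gives $D_\ell(x)=\frac{\sqrt{2\ell x}}{\log^2x}\bigl(1+\bigO_\ell(1/\log x)\bigr)$. Dividing by $q_\ell(x)$ yields
\[
\frac{D_\ell(x)}{q_\ell(x)}=\frac{\sqrt{2\ell x}}{\log^2x}\cdot\frac{\log x}{\sqrt{2x}}\bigl(1+\bigO_\ell(1/\log x)\bigr)
=\frac{\sqrt\ell}{\log x}+\bigO_\ell\!\left(\frac1{\log^2x}\right).
\]
The factor $1+\bigO_\ell(x^{-1/2})$ from $q_\ell$ is absorbed into this error. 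This completes \eqref{eq:transfer-precise}. The argument is purely algebraic and unconditional, because $E(x)$ appears exactly and is never bounded.

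For \eqref{eq:transfer-simple}, it remains to absorb the term $\bigO_\ell(x^{-1/2})E(x)$ into $\bigO_\ell(1/\log^2 x)$. The classical de la Vallée Poussin remainder $\li(x)-\pi(x)=\bigO\bigl(x\exp(-c\sqrt{\log x})\bigr)$ gives
\[
E(x)=\bigO\bigl(\sqrt{x}\,\log x\,e^{-c\sqrt{\log x}}\bigr),
\]
and hence
\[
x^{-1/2}E(x)=\bigO\bigl(\log x\,e^{-c\sqrt{\log x}}\bigr)=\bigO(1/\log^2x),
\]
since $e^{-c\sqrt{\log x}}$ decays faster than any power of $1/\log x$.

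The only real obstacle is this last step. One must check that the multiplicative error in the coefficient of $E$ is of size $x^{-1/2}$, and not merely $1/\log x$. Otherwise the zero-free-region bound, under which $E(x)$ may be nearly as large as $\sqrt{x}$, could not be absorbed. \Cref{lem:q-asymptotic} supplies the needed $x^{-1/2}$ precision, so I expect this to go through. No other earlier results are required.
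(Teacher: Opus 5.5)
Your proposal is correct and follows the paper's proof essentially step for step. You use the same split of $Z_\ell$ into $(\li(x)-\pi(x))/q_\ell(x)$ minus $D_\ell(x)/q_\ell(x)$, the same appeals to \cref{lem:q-asymptotic} and \cref{thm:D-asymptotic} for the two quotients, and the same absorption of the $\bigO_\ell(x^{-1/2})E(x)$ term via the de la Vall\'ee Poussin remainder. You also correctly identify the point the paper's final step relies on: the coefficient error must be $\bigO_\ell(x^{-1/2})$ rather than merely $\bigO(1/\log x)$.
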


\begin{proof}
By definitions \eqref{eq:D-def}, \eqref{eq:Z-def}, and \eqref{eq:E-def},
\begin{equation}\label{eq:Z-split}
Z_\ell(x)
 =\frac{\li(x)-\pi(x)}{q_\ell(x)}
  -\frac{D_\ell(x)}{q_\ell(x)}.
\end{equation}
The denominator asymptotic \eqref{eq:q-asymptotic} gives
\begin{equation}\label{eq:prime-part}
\frac{\li(x)-\pi(x)}{q_\ell(x)}
 =\left(\frac1{\sqrt2}+\bigO_\ell(x^{-1/2})\right)E(x).
\end{equation}
Next, \cref{thm:D-asymptotic,lem:q-asymptotic} imply
\begin{align}
\frac{D_\ell(x)}{q_\ell(x)}
 &=\frac{
     \dfrac{\sqrt{2\ell x}}{\log^2x}
       +\bigO_\ell(\sqrt{x}/\log^3x)
    }{
     \dfrac{\sqrt{2x}}{\log x}
       \left(1+\bigO_\ell(x^{-1/2})\right)
    }\notag\\
 &=\frac{\sqrt\ell}{\log x}
   +\bigO_\ell\!\left(\frac1{\log^2x}\right).
\label{eq:D-over-q}
\end{align}
Substitution into \eqref{eq:Z-split} proves \eqref{eq:transfer-precise}.

Finally, the classical de la Vall\'ee Poussin zero-free region implies, for some $c>0$,
\begin{equation}\label{eq:PNT-remainder}
\li(x)-\pi(x)=\bigO\!\left(xe^{-c\sqrt{\log x}}\right);
\end{equation}
see, for example, \cite[Chapter~III]{Titchmarsh}. Hence
\[
\frac{|E(x)|}{\sqrt{x}}
 =\bigO\!\left(\log x\,e^{-c\sqrt{\log x}}\right)
 =\bigO\!\left(\frac1{\log^2x}\right).
\]
The coefficient error in \eqref{eq:transfer-precise} can therefore be absorbed into the final remainder. No form of RH has been used.
\end{proof}

\subsection{Logarithmic means}

\begin{definition}[Logarithmic mean]\label{def:log-mean}
For a locally integrable function $f$ on $[2,\infty)$, define
\begin{equation}\label{eq:log-mean-def}
\Mcal_X[f]
 :=\frac1{\log(X/2)}\int_2^X f(x)\frac{\dd x}{x},
\qquad X>2.
\end{equation}
\end{definition}

\begin{theorem}[Unconditional transfer of logarithmic means]\label{thm:mean-transfer}
For every fixed $\ell\geq1$,
\begin{equation}\label{eq:mean-transfer-exact}
\Mcal_X[Z_\ell]
 =\frac1{\sqrt2}\Mcal_X[E]
  -\sqrt\ell\,
   \frac{\log\log X-\log\log2}{\log(X/2)}
  +\bigO_\ell\!\left(\frac1{\log X}\right).
\end{equation}
In particular,
\begin{equation}\label{eq:mean-transfer-short}
\Mcal_X[Z_\ell]
 -\frac1{\sqrt2}\Mcal_X[E]
 =\bigO_\ell\!\left(\frac{\log\log X}{\log X}\right).
\end{equation}
Consequently, one of the two logarithmic means has a finite limit if and only if the other does, and in that case
\begin{equation}\label{eq:mean-limit-scaling}
\lim_{X\to\infty}\Mcal_X[Z_\ell]
 =\frac1{\sqrt2}\lim_{X\to\infty}\Mcal_X[E].
\end{equation}
\end{theorem}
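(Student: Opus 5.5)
The plan is to integrate the pointwise relation \eqref{eq:transfer-simple} of \cref{thm:transfer} against $\dd x/x$ over $[2,X]$ and divide by $\log(X/2)$. The only subtlety is that \eqref{eq:transfer-simple} is an asymptotic statement valid for $x\geq x_0(\ell)$. I would therefore fix $x_0=x_0(\ell)\geq L_2^{(\ell)}$ beyond which
\[
R_\ell(x):=Z_\ell(x)-\frac1{\sqrt2}E(x)+\frac{\sqrt\ell}{\log x}
\]
satisfies $|R_\ell(x)|\leq C_\ell/\log^2x$. I would then split $[2,X]=[2,x_0]\cup[x_0,X]$.

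On the compact piece $[2,x_0]$ each of $Z_\ell$, $E$, and $\sqrt\ell/\log x$ is bounded, so the piece contributes $\bigO_\ell(1)$ to $\int_2^X R_\ell(x)\,\dd x/x$. There is one caveat here. The ceiling estimator is only defined for $x\geq L_2^{(\ell)}$. On the other hand, $q_\ell$ is positive and bounded below on bounded ranges, since $h\geq1$ and $\log L_n\leq\log L_{n}$ is bounded. Also $\pi$ and $\li$ are bounded on compacts, apart from the integrable logarithmic singularity of $\li$ at $1$, which does not lie in $[2,x_0]$. So I would interpret $Z_\ell$ on $[2,L_2^{(\ell)})$ via the same formulas, or simply note that any fixed bounded definition there changes $\Mcal_X[Z_\ell]$ by $\bigO(1/\log X)$. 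Local integrability follows because all the functions involved are piecewise continuous.

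On $[x_0,X]$ I would compute
\[
\int_{x_0}^X \frac{C_\ell}{x\log^2x}\,\dd x\leq \frac{C_\ell}{\log x_0},
\]
which is bounded. This gives $\int_2^X R_\ell\,\dd x/x=\bigO_\ell(1)$. The explicit middle term is
\[
\int_2^X\frac{\sqrt\ell}{x\log x}\,\dd x=\sqrt\ell(\log\log X-\log\log2).
\]
Dividing everything by $\log(X/2)$ yields \eqref{eq:mean-transfer-exact}. Then \eqref{eq:mean-transfer-short} is immediate, because $(\log\log X)/\log(X/2)\asymp\log\log X/\log X$.

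For the final equivalence, the right-hand side of \eqref{eq:mean-transfer-short} tends to $0$. Hence $\Mcal_X[Z_\ell]$ converges if and only if $\Mcal_X[E]$ does, and the limits are related by the factor $1/\sqrt2$, which is \eqref{eq:mean-limit-scaling}.

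I expect the main obstacle to be the bookkeeping near the lower endpoint: defining $Z_\ell$ on the initial segment and checking that the error in \eqref{eq:transfer-simple} is uniform with an explicit threshold. Both are handled by the compact-range $\bigO(1)$ absorption described above, so no genuine analytic difficulty arises. No form of RH is used anywhere in this argument.
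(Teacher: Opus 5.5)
Your proposal is correct and matches the paper's proof: integrate the pointwise formula \eqref{eq:transfer-simple} against $\dd x/x$, absorb both the compact initial segment and the $\bigO_\ell(1/\log^2x)$ remainder into an $\bigO_\ell(1)$ term, divide by $\log(X/2)$, and read off the limit statement.

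Your extra remark that $\widehat S_\ell$ is only defined for $x\geq L_2^{(\ell)}$ addresses a point the paper leaves implicit, and it is harmless: the defining formula \eqref{eq:hatS-def} in fact makes sense on $[2,L_2^{(\ell)}]$ as well. Also, the inequality ``$\log L_n\leq\log L_n$'' in your compact-range argument is a typo; what you need there is that $\log L_{n_x}^{(\ell)}$ is bounded on bounded ranges.
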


\begin{proof}
Integrate \eqref{eq:transfer-simple} against $\dd x/x$ from a fixed sufficiently large point to $X$. The omitted compact initial interval contributes $\bigO_\ell(1/\log X)$ after normalization. Moreover,
\[
\int_2^X\frac{\dd x}{x\log x}
 =\log\log X-\log\log2,
\]
and
\[
\int_2^X\frac{\dd x}{x\log^2x}
 =\frac1{\log2}-\frac1{\log X}=\bigO(1).
\]
Division by $\log(X/2)$ proves \eqref{eq:mean-transfer-exact}; the remaining statements follow immediately.
\end{proof}

\begin{remark}[What is unconditional and what is not]\label{rem:scope}
Theorems \ref{thm:transfer} and \ref{thm:mean-transfer} are unconditional. They do not assert that either logarithmic mean converges. As shown in \cref{sec:RH}, convergence itself is equivalent to the Riemann hypothesis.
\end{remark}

\section{Logarithmic means and the Riemann hypothesis}\label{sec:RH}

\subsection{The implication from RH and the optimal coefficient}

The following consequence of the limiting-distribution theory of Wintner and of Akbary, Ng and Shahabi is the analytic input used in this direction.

\begin{proposition}[Limiting mean under RH]\label{prop:ANS}
Assume the Riemann hypothesis. Then
\[
y e^{-y/2}\bigl(\pi(e^y)-\li(e^y)\bigr)
\]
has a limiting distribution with respect to Ces\`aro averaging in $y$, and the mean of that distribution is $-1$. Equivalently,
\begin{equation}\label{eq:E-mean-under-RH}
\lim_{X\to\infty}\Mcal_X[E]=1.
\end{equation}
\end{proposition}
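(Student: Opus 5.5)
The plan is to go through the explicit formula for $\pi$ under RH, pass to the variable $y=\log x$, and recognise $E(e^y)$ as the constant $1$ plus a $B^2$-almost periodic oscillation with mean zero. The limiting distribution then comes from the Wintner (Kronecker--Weyl) argument, exactly as in Akbary--Ng--Shahabi. Note first that, by the substitution $x=e^y$,
\[
\Mcal_X[E]=\frac1{\log X-\log 2}\int_{\log 2}^{\log X}E(e^y)\dd y,
\]
so the logarithmic mean is precisely the Ces\`aro mean in $y$. Hence \eqref{eq:E-mean-under-RH} and the statement about the distribution of $-E(e^y)=ye^{-y/2}(\pi(e^y)-\li(e^y))$ are the same assertion, provided that distribution has a first moment. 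It does, because the oscillating part will be shown to be bounded in $L^2$ mean.

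\textbf{Step 1 (explicit formula).} Start from Riemann's $\Pi(x)=\sum_{k\geq1}\pi(x^{1/k})/k$ and the von Mangoldt--Riemann explicit formula
\[
\Pi(x)=\li(x)-\sum_\rho\li(x^\rho)-\log2+\int_x^\infty\frac{\dd t}{t(t^2-1)\log t}.
\]
Under RH, Chebyshev bounds give $\pi(x)=\Pi(x)-\tfrac12\pi(\sqrt x)+\bigO(x^{1/3})$, with $\tfrac12\pi(\sqrt x)=\sqrt x/\log x+\bigO(\sqrt x/\log^2x)$. Next, integrate $\li(x^\rho)$ by parts: $\li(x^\rho)=x^\rho/(\rho\log x)+\bigO\bigl(\sqrt x/(|\rho|^2\log^2x)\bigr)$, which is summable in $\rho$. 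Multiplying by $\log x/\sqrt x$ and writing $\rho=\tfrac12+i\gamma$, this yields
\[
E(e^y)=1+2\,\mathrm{Re}\sum_{\gamma>0}\frac{e^{i\gamma y}}{\rho}+\varepsilon(y),\qquad \varepsilon(y)\to0 .
\]
Here the zero sum is understood as $\lim_{T\to\infty}$ of the sum over $0<\gamma\leq T$.

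\textbf{Step 2 (almost periodicity).} Set $P_T(y)=2\,\mathrm{Re}\sum_{0<\gamma\leq T}e^{i\gamma y}/\rho$, a trigonometric polynomial. The key estimate, which I expect to be the main obstacle, is the tail bound
\[
\frac1Y\int_{\log 2}^{Y}\Bigl|E(e^y)-1-P_T(y)\Bigr|^2\dd y\ll \frac{\log^2T}{T}+o_{Y\to\infty}(1).
\]
This requires the truncated explicit formula with an error term uniform in $x$, for example $\psi(x)=x-\sum_{|\gamma|\leq T}x^\rho/\rho+\bigO\bigl(x\log^2(xT)/T\bigr)$. I would transfer it to $\pi$ by partial summation and then apply a Gallagher/Cram\'er-type mean-square bound to the zeros in $T<|\gamma|\leq 2^kT$, using $N(T+1)-N(T)\ll\log T$ and summing dyadically. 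Cramer's classical computation
\[
\int_Y^{2Y}\Bigl|\sum_{\gamma>T}e^{i\gamma y}/\rho\Bigr|^2\dd y\ll Y\log^2T/T
\]
is the template I would try first. This shows that $E(e^y)-1$ is $B^2$-almost periodic. Wintner's theorem, or the Kronecker--Weyl equidistribution argument of Akbary--Ng--Shahabi, then gives a limiting distribution $\mu$.

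\textbf{Step 3 (the mean).} Each $e^{i\gamma y}$ with $\gamma\neq0$ has Ces\`aro mean $\bigO(1/(\gamma Y))\to0$, so $P_T$ has mean zero for every fixed $T$. By Cauchy--Schwarz, the $L^2$ tail bound of Step 2 controls the $L^1$ mean of $E(e^y)-1-P_T$ uniformly in large $Y$. Letting $Y\to\infty$ and then $T\to\infty$ gives $\lim_{X\to\infty}\Mcal_X[E]=1$. The uniform $L^2$ bound also makes $|t|$ uniformly integrable under $\mu$, so the mean of $\mu$ exists and equals the limit of the Ces\`aro means, namely $1$. Reversing the sign gives the value $-1$ for $ye^{-y/2}(\pi(e^y)-\li(e^y))$, as claimed.
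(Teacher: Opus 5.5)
Your proposal is correct and follows essentially the same route as the paper. Both arguments rest on the explicit formula under RH, with the constant $1$ in $E(e^y)$ coming from the prime-square term $\tfrac12\pi(\sqrt x)$, a mean-zero oscillatory zero sum, and the Wintner/Akbary--Ng--Shahabi almost-periodicity argument for the limiting distribution. The paper simply cites Akbary--Ng--Shahabi and adds a remark about the normalization of the logarithmic integral, whereas you sketch the $B^2$ tail estimate yourself. You also make explicit the uniform-integrability step that justifies the word ``Equivalently'', which the paper leaves implicit.
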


\begin{proof}[Source and normalization]
The limiting-distribution statement follows from \cite{AkbaryNgShahabi}; see also Wintner's foundational result \cite{Wintner}. The convention for $\operatorname{Li}$ in \cite{AkbaryNgShahabi} differs from the principal-value convention used here by an additive constant. After multiplication by $\log x/\sqrt{x}$, this difference tends to zero and has zero logarithmic mean.

Under RH, the standard explicit formula contains a deterministic contribution $-1$ arising from prime squares, while the nonconstant oscillatory terms have zero Ces\`aro mean. The general limiting-distribution theorem identifies the mean with that constant term. Reversing the sign gives mean $+1$ for $E(e^y)$. Since $x=e^y$ transforms $\dd x/x$ into $\dd y$, this is precisely \eqref{eq:E-mean-under-RH}.
\end{proof}

\begin{corollary}[Mean of the normalized S--S error]\label{cor:one-over-sqrt2}
Assume the Riemann hypothesis. For every fixed $\ell\geq1$,
\begin{equation}\label{eq:Z-mean-under-RH}
\boxed{
\lim_{X\to\infty}\Mcal_X[Z_\ell]=\frac1{\sqrt2}
}.
\end{equation}
\end{corollary}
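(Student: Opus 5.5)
The plan is to combine the unconditional mean-transfer result, \cref{thm:mean-transfer}, with the conditional input \cref{prop:ANS}. The argument is essentially a two-line deduction, and the only care needed is to check that the hypotheses of the transfer theorem are met as stated.

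First, assume RH. By \cref{prop:ANS}, specifically \eqref{eq:E-mean-under-RH}, the logarithmic mean $\Mcal_X[E]$ converges and its limit is $1$. One should note that $E$ is locally integrable on $[2,\infty)$, so that $\Mcal_X[E]$ is defined for every $X>2$. This holds because $\li$ is continuous there and $\pi$ is a step function. The same reasoning shows $\Mcal_X[Z_\ell]$ is well defined: $q_\ell$ is bounded below on compact subsets of $[L_2^{(\ell)},\infty)$, and any bounded initial segment of the range contributes only $\bigO_\ell(1/\log X)$ after normalization, exactly as in the proof of \cref{thm:mean-transfer}.

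Second, apply \eqref{eq:mean-transfer-short}. It gives
\[
\Mcal_X[Z_\ell]=\frac1{\sqrt2}\Mcal_X[E]+\bigO_\ell\!\left(\frac{\log\log X}{\log X}\right).
\]
The error term tends to $0$ as $X\to\infty$, and the main term tends to $1/\sqrt2$ by the first step. Hence $\lim_{X\to\infty}\Mcal_X[Z_\ell]=1/\sqrt2$. Equivalently, this is the ``if and only if'' clause \eqref{eq:mean-limit-scaling} of \cref{thm:mean-transfer}, applied with $\lim\Mcal_X[E]=1$.

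There is no real obstacle here; all the analytic content sits in \cref{thm:transfer} and \cref{prop:ANS}. The one point worth highlighting is why the limit is independent of $\ell$. The $\ell$-dependent bias term $-\sqrt\ell/\log x$ has logarithmic mean of order $\log\log X/\log X$, so it vanishes in the limit. The only surviving constant is the ratio $1/\sqrt2$ coming from $q_\ell(x)\sim\sqrt{2x}/\log x$ in \cref{lem:q-asymptotic}.
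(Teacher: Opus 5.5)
Your proposal is correct and is essentially the paper's own proof, which simply combines \eqref{eq:E-mean-under-RH} from \cref{prop:ANS} with the unconditional \cref{thm:mean-transfer}. Your extra remarks are accurate supplementary checks rather than a different route: the local integrability of $Z_\ell$ and $E$, and the observation that the $-\sqrt\ell/\log x$ term has vanishing logarithmic mean, so the limit is independent of $\ell$.
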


\begin{proof}
Combine \eqref{eq:E-mean-under-RH} with \cref{thm:mean-transfer}.
\end{proof}

\begin{theorem}[Asymptotic optimality of $1/\sqrt2$]\label{thm:coefficient-optimality}
Assume the Riemann hypothesis. For every fixed $\ell\geq1$ and every fixed $c\in\mathbb R$,
\begin{equation}\label{eq:coefficient-bias}
\lim_{X\to\infty}
\Mcal_X\!\left[
\frac{S_{\ell,c}(x)-\pi(x)}{q_\ell(x)}
\right]
 =\frac1{\sqrt2}-c.
\end{equation}
Consequently,
\begin{equation}\label{eq:c-theoretical}
c_{\mathrm{th}}=\frac1{\sqrt2}
\end{equation}
is the unique fixed coefficient that gives zero asymptotic logarithmic-mean bias. Equivalently, it uniquely minimizes the absolute limiting bias
\[
\left|\frac1{\sqrt2}-c\right|.
\]
In this precise sense, $S_{\ell,1/\sqrt2}$ is the asymptotically optimal member of the S--S correction family.
\end{theorem}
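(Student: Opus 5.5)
The plan is to reduce the statement to \cref{cor:one-over-sqrt2} through an exact algebraic identity. By \eqref{eq:S-family-def}, $S_{\ell,c}(x)-\pi(x)=\widehat S_\ell(x)-\pi(x)-c\,q_\ell(x)$. Dividing by $q_\ell(x)$ gives the pointwise identity
\[
\frac{S_{\ell,c}(x)-\pi(x)}{q_\ell(x)}=Z_\ell(x)-c,
\]
valid for all $x\geq L_2^{(\ell)}$, where $q_\ell(x)>0$. Before dividing, I would check that $q_\ell$ does not vanish. This holds because $h_{n-1}^{(\ell)}\geq1$ and $\log L_n^{(\ell)}>0$ once $L_n^{(\ell)}>1$.

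Next I apply the logarithmic mean. The operator $\Mcal_X$ is linear, and $\Mcal_X[1]=1$ exactly, since $\int_2^X\dd x/x=\log(X/2)$. The only wrinkle is the initial range $[2,L_2^{(\ell)})$, where $\widehat S_\ell$ is not defined. There I assign any bounded values, or start the integral at a fixed point as in the proof of \cref{thm:mean-transfer}. A compact initial interval contributes $\bigO_\ell(1/\log X)$ after normalization, so it does not affect the limit. With this convention, $\Mcal_X[Z_\ell-c]=\Mcal_X[Z_\ell]-c+\bigO_\ell(1/\log X)$.

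Letting $X\to\infty$ and invoking \cref{cor:one-over-sqrt2}, which assumes RH, gives the limit $1/\sqrt2-c$. This is \eqref{eq:coefficient-bias}.

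The uniqueness and minimization claims follow at once. The map $c\mapsto|1/\sqrt2-c|$ is nonnegative and vanishes exactly at $c=1/\sqrt2$, so that value is the unique zero and the unique minimizer.

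I do not expect a genuine obstacle. All the analytic content sits in \cref{thm:mean-transfer} and \cref{prop:ANS}, both of which are already available. The one point to handle carefully is consistency of the domain of $Z_\ell$ with the integration range in \eqref{eq:log-mean-def}. The compact-interval argument above settles it.
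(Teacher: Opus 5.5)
Your proposal is correct and follows the paper's proof: you derive the pointwise identity $(S_{\ell,c}-\pi)/q_\ell = Z_\ell - c$ from the definition of the correction family, use linearity of $\Mcal_X$ with $\Mcal_X[1]=1$, and invoke \cref{cor:one-over-sqrt2}, after which uniqueness is immediate. Your checks that $q_\ell>0$ and that the interval $[2,L_2^{(\ell)})$, where $\widehat S_\ell$ is undefined, contributes only $\bigO_\ell(1/\log X)$ are valid refinements that the paper leaves implicit.
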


\begin{proof}
By \eqref{eq:S-family-def},
\[
\frac{S_{\ell,c}(x)-\pi(x)}{q_\ell(x)}=Z_\ell(x)-c.
\]
Now apply \cref{cor:one-over-sqrt2}. The uniqueness statement follows immediately.
\end{proof}

\subsection{The converse direction and equivalent criteria}

The converse direction is established in the companion paper by Samojluk and Siemaszko, \emph{A Converse to Wintner's Theorem} \cite{SamojlukSiemaszkoConverse}. We record only the statements needed to position the present estimator; their proofs are not repeated here.

\begin{theorem}[Converse logarithmic-mean theorem]\label{thm:converse-RH}
If the finite limit
\begin{equation}\label{eq:E-finite-limit}
\lim_{X\to\infty}\Mcal_X[E]
\end{equation}
exists, then the Riemann hypothesis is true.
\end{theorem}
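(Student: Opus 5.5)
The plan is to argue by contraposition through a Mellin transform of $\li(x)-\pi(x)$, in the spirit of Landau's oscillation theorem. In fact only the \emph{boundedness} of $\Mcal_X[E]$ should be needed, not the existence of the limit. Suppose the finite limit \eqref{eq:E-finite-limit} exists. Write $F(y):=E(e^y)=y\,e^{-y/2}\bigl(\li(e^y)-\pi(e^y)\bigr)$ and $A(Y):=\int_{\log 2}^{Y}F(y)\dd y$. Since $\dd x/x=\dd y$, the hypothesis says $A(Y)=mY+o(Y)$, so in particular $A(Y)=\bigO(Y)$. This is the only input I intend to use.

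\textbf{Step 1 (analytic side).} For $\operatorname{Re}s>1$ consider
\[
G(s):=\int_2^\infty\bigl(\li(x)-\pi(x)\bigr)x^{-s-1}\dd x .
\]
Using $\log\zeta(s)=s\int_1^\infty\Pi(x)x^{-s-1}\dd x$, where $\Pi(x)=\sum_{k\ge1}\pi(x^{1/k})/k$, together with the standard Mellin transform of $\li$, I expect
\[
G(s)=-\frac1s\bigl(\log\zeta(s)+\log(s-1)\bigr)+H(s).
\]
Here $H$ collects the entire contributions from the interval $[1,2]$ and from the constant in $\li$, plus the prime-power terms $\sum_{k\ge2}\frac1k\int\pi(x^{1/k})x^{-s-1}\dd x$. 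Since $\pi(x^{1/k})\ll x^{1/2}$ for $k\ge2$, these terms are holomorphic in $\operatorname{Re}s>1/2$. Note that $\log\zeta(s)+\log(s-1)$ is regular at $s=1$. Differentiating removes any branch ambiguity: $G'(s)$ contains the term $-\frac1s\,\zeta'(s)/\zeta(s)$. This term has a genuine simple pole, with residue $-m_\rho/\rho\neq0$, at every zero $\rho$ of $\zeta$ with $\operatorname{Re}\rho>1/2$. No other part of $G'$ can cancel that pole, since everything else is holomorphic there.

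\textbf{Step 2 (continuation from the mean hypothesis).} Substitute $x=e^y$ and $s=\tfrac12+\sigma$. Then
\[
G(s)=\int_{\log2}^\infty F(y)\,\frac{e^{-\sigma y}}{y}\dd y .
\]
Integrating by parts against $A$ gives
\[
G(s)=\int_{\log 2}^\infty A(y)\Bigl(\frac{\sigma}{y}+\frac1{y^2}\Bigr)e^{-\sigma y}\dd y,
\]
and the boundary term vanishes for $\operatorname{Re}\sigma>0$ because $A(y)=\bigO(y)$. The integrand is then $\bigO\bigl((|\sigma|+1)e^{-(\operatorname{Re}\sigma)y}\bigr)$. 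The integral therefore converges locally uniformly on $\operatorname{Re}\sigma>0$, so it defines a holomorphic function on $\operatorname{Re}s>1/2$. It agrees with $G$ on $\operatorname{Re}s>1$, so it is the analytic continuation of $G$ there.

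\textbf{Step 3 (contradiction).} By Step 2, $G'$ is holomorphic on $\operatorname{Re}s>1/2$. By Step 1, $G'$ has a pole at any zero $\rho$ of $\zeta$ with $\operatorname{Re}\rho>1/2$. Hence no such zero exists. By the functional equation, zeros are symmetric about the critical line, so all nontrivial zeros lie on $\operatorname{Re}s=1/2$, which is RH.

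The main obstacle is Step 1: the bookkeeping must show that $H$ is truly holomorphic on all of $\operatorname{Re}s>1/2$. This requires controlling the $k=2$ term, which sits exactly at the edge $\operatorname{Re}s>1/2$, and the small-$x$ contributions, including the principal-value convention for $\li$ and the normalization of the logarithm at $s=1$. I would handle these by working with $G'$ throughout and by splitting off the $k=2$ term explicitly, where $\pi(x^{1/2})x^{-s-1}$ is integrable for $\operatorname{Re}s>1/2$.
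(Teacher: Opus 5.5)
The paper gives no proof of this theorem. It is only stated and attributed to the companion manuscript \emph{A Converse to Wintner's Theorem}, so there is no in-paper argument to compare with. Judged on its own, your proposal is the classical Landau--Mellin argument, and it is sound in substance.

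Step~2 is correct. Partial integration against $A$ shows that the Laplace integral for $G$ converges for $\operatorname{Re}s>1/2$ as soon as $A(Y)=\bigO(Y)$; even $A(Y)=\bigO(e^{\varepsilon Y})$ for every $\varepsilon>0$ would suffice. So you rightly need only $\Mcal_X[E]=\bigO(1)$, and by \cref{thm:mean-transfer} the same weaker hypothesis on $\Mcal_X[Z_\ell]$ also suffices. The decomposition $G(s)=-s^{-1}\log\bigl((s-1)\zeta(s)\bigr)+H(s)$ in Step~1 is also correct, with $H$ holomorphic on $\operatorname{Re}s>1/2$. The $\li$ part contributes $-s^{-1}\log(s-1)$ plus $s^{-1}$ times an entire function. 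Your route therefore gives a self-contained proof of a stronger statement, using only the Euler product and an abscissa-of-convergence argument.

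One step is misstated and must be repaired. Differentiating $G$ does not remove the branch ambiguity, because of the factor $1/s$. On $\operatorname{Re}s>1$,
\[
G'(s)=\frac{1}{s^{2}}\log\bigl((s-1)\zeta(s)\bigr)-\frac1s\Bigl(\frac{\zeta'}{\zeta}(s)+\frac1{s-1}\Bigr)+H'(s).
\]
The first term has a logarithmic branch point at every zero $\rho$. So the claim that ``everything else is holomorphic there'' is false, and ``$G'$ has a pole at $\rho$'' is not well posed.

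The fix is to differentiate $s\bigl(G(s)-H(s)\bigr)$ instead. On $\operatorname{Re}s>1$ its derivative is $-\zeta'(s)/\zeta(s)-1/(s-1)$. By Steps~1--2 this function is holomorphic on $\operatorname{Re}s>1/2$. The identity theorem for the meromorphic function $\zeta'/\zeta$ then excludes poles, hence zeros of $\zeta$, in $\operatorname{Re}s>1/2$. Equivalently, $\exp\bigl(-s(G(s)-H(s))\bigr)$ is holomorphic and zero-free on $\operatorname{Re}s>1/2$ and equals $(s-1)\zeta(s)$ on $\operatorname{Re}s>1$.

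A smaller point concerns convergence in $k$. The termwise bound $\pi(x^{1/k})\ll x^{1/2}$ alone does not make $\sum_{k\ge2}k^{-1}\int\pi(x^{1/k})x^{-s-1}\dd x$ converge. Use instead that the $k$-th term vanishes for $x<2^k$, or use the bound $\Pi(x)-\pi(x)\ll x^{1/2}\log x$.
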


\begin{theorem}[Equivalent logarithmic-mean formulations]\label{thm:equivalences}
For every fixed $\ell\geq1$, the following statements are equivalent:
\begin{enumerate}[label=(\alph*),leftmargin=2em]
\item the Riemann hypothesis is true;
\item $\Mcal_X[E]$ has a finite limit as $X\to\infty$;
\item $\displaystyle\lim_{X\to\infty}\Mcal_X[E]=1$;
\item $\Mcal_X[Z_\ell]$ has a finite limit as $X\to\infty$;
\item $\displaystyle\lim_{X\to\infty}\Mcal_X[Z_\ell]=1/\sqrt2$;
\item
\[
\lim_{X\to\infty}
\Mcal_X\!\left[
\frac{S_{\ell,\mathrm{th}}(x)-\pi(x)}{q_\ell(x)}
\right]=0.
\]
\end{enumerate}
\end{theorem}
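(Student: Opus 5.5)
The plan is to prove the cycle of implications (a)$\Rightarrow$(c)$\Rightarrow$(b)$\Rightarrow$(a) for the three statements about $E$. Then I will attach (d), (e), (f) to this cycle using the unconditional transfer results, \cref{thm:mean-transfer} and \cref{thm:coefficient-optimality}. No new analytic input is needed; the only nontrivial ingredients are \cref{prop:ANS} for the forward direction and \cref{thm:converse-RH} for the converse.

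\emph{The $E$-cycle.} Assuming (a), \cref{prop:ANS} gives $\lim_{X\to\infty}\Mcal_X[E]=1$, which is (c). The implication (c)$\Rightarrow$(b) is trivial. The implication (b)$\Rightarrow$(a) is exactly \cref{thm:converse-RH}. Hence (a), (b), (c) are equivalent.

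\emph{Transfer to $Z_\ell$.} By \eqref{eq:mean-transfer-short}, $\Mcal_X[Z_\ell]-\frac1{\sqrt2}\Mcal_X[E]\to0$ unconditionally. Therefore $\Mcal_X[Z_\ell]$ converges to a finite limit if and only if $\Mcal_X[E]$ does, which gives (b)$\Leftrightarrow$(d). In that case \eqref{eq:mean-limit-scaling} shows that the limit of $\Mcal_X[Z_\ell]$ equals $1/\sqrt2$ times the limit of $\Mcal_X[E]$, which gives (c)$\Leftrightarrow$(e). Concretely: (c) yields (e) by \eqref{eq:mean-limit-scaling}; conversely (e) implies (d), hence (b), and then \eqref{eq:mean-limit-scaling} forces $\lim\Mcal_X[E]=\sqrt2\cdot\frac1{\sqrt2}=1$.

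\emph{The corrected estimator.} By \eqref{eq:S-family-def} with $c=1/\sqrt2$, the identity $(S_{\ell,\mathrm{th}}(x)-\pi(x))/q_\ell(x)=Z_\ell(x)-1/\sqrt2$ holds pointwise for $x\geq L_2^{(\ell)}$. Since $\Mcal_X$ is linear and $\Mcal_X[1]=1$, any bounded discrepancy on the initial compact interval contributes only $\bigO(1/\log X)$. Hence $\Mcal_X\bigl[(S_{\ell,\mathrm{th}}-\pi)/q_\ell\bigr]=\Mcal_X[Z_\ell]-1/\sqrt2+o(1)$, and (e)$\Leftrightarrow$(f) follows.

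The only delicate point is bookkeeping on $[2,L_2^{(\ell)})$, where $\widehat S_\ell$ and $q_\ell$ are not defined by the formulas. I would handle this by fixing any bounded locally integrable extension, or by starting the mean at a fixed larger point. Either choice changes $\Mcal_X$ by $\bigO_\ell(1/\log X)$ and so does not affect any limit, exactly as in the proof of \cref{thm:mean-transfer}. The genuine mathematical depth lies entirely in \cref{thm:converse-RH}, which is quoted rather than proved here.
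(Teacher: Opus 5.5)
Your proposal is correct and assembles the equivalences the same way the paper does: \cref{prop:ANS} for (a)$\Rightarrow$(c), the quoted \cref{thm:converse-RH} for (b)$\Rightarrow$(a), \cref{thm:mean-transfer} for (b)$\Leftrightarrow$(d) and (c)$\Leftrightarrow$(e), and the pointwise identity $(S_{\ell,c}-\pi)/q_\ell=Z_\ell-c$ with $\Mcal_X[1]=1$ for (e)$\Leftrightarrow$(f). One labelling point: \cref{thm:coefficient-optimality} assumes RH, so it should not be listed among the ``unconditional'' inputs; your actual (e)$\Leftrightarrow$(f) step uses only the unconditional identity, so there is no circularity.
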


\begin{remark}[Logical status]\label{rem:logical-status}
The present article proves the unconditional transfer formulas in \cref{thm:transfer,thm:mean-transfer}, recalls the implication RH$\Rightarrow\Mcal_X[E]\to1$ from the limiting-distribution literature, and derives the optimal coefficient within the S--S family. The reverse implication in \cref{thm:converse-RH}, and therefore the converse part of \cref{thm:equivalences}, is attributed to the separate manuscript \cite{SamojlukSiemaszkoConverse}. No claim is made here that RH itself has been proved.
\end{remark}

\section{Numerical experiments}\label{sec:numerics}

\subsection{Computational setting}

The original numerical computations were carried out in Google Colab with Python 3 on a CPU. Standard numerical and data-analysis packages were used, and the \texttt{decimal} module was configured to 30 significant digits. The experiments report values at $x=10^k$, $1\leq k\leq19$, for the triangular case $\ell=1$.

The updated S--S column uses the fixed coefficient $c_T=0.7071$ and the notation
\[
\widetilde S_T(x)=S_{1,0.7071}(x).
\]
We also report the signed relative error
\begin{equation}\label{eq:dT-def}
d_T(x):=\frac{\widetilde S_T(x)-\pi(x)}{\pi(x)}.
\end{equation}
The displayed values of $\widetilde S_T(x)$ and $\widetilde S_T(x)-\pi(x)$ are rounded to two decimal places, whereas $d_T(x)$ was calculated from the unrounded internal values. Consequently, a displayed error of $0.00$ need not mean that the full-precision error is exactly zero.

\begin{table}[p]
\centering
\small
\resizebox{\textwidth}{!}{%
\begin{tabular}{rrrrrr}
\toprule
$x$ & $n_x$ & $T_{n_x}$ & $\pi(x)$ & $R(x)$ & $R(x)-\pi(x)$\\
\midrule
$10^1$  & 4          & 10                   & 4                  & 5                  & 1\\
$10^2$  & 14         & 105                  & 25                 & 26                 & 1\\
$10^3$  & 45         & 1035                 & 168                & 168                & 0\\
$10^4$  & 141        & 10011                & 1229               & 1227               & -2\\
$10^5$  & 447        & 100128               & 9592               & 9587               & -5\\
$10^6$  & 1414       & 1000405              & 78498              & 78527              & 29\\
$10^7$  & 4472       & 10001628             & 664579             & 664667             & 88\\
$10^8$  & 14142      & 100005153            & 5761455            & 5761551            & 96\\
$10^9$  & 44721      & 1000006281           & 50847534           & 50847455           & -79\\
$10^{10}$ & 141421     & 10000020331          & 455052511          & 455050683          & -1828\\
$10^{11}$ & 447214     & 100000404505         & 4118054813         & 4118052495         & -2318\\
$10^{12}$ & 1414214    & 1000001326005        & 37607912018        & 37607910542        & -1476\\
$10^{13}$ & 4472136    & 10000002437316       & 346065536839       & 346065531066       & -5773\\
$10^{14}$ & 14142136   & 100000012392316      & 3204941750802      & 3204941731602      & -19200\\
$10^{15}$ & 44721360   & 1000000042485480     & 29844570422669     & 29844570495887     & 73218\\
$10^{16}$ & 141421356  & 10000000037150046    & 279238341033925    & 279238341360977    & 327052\\
$10^{17}$ & 447213595  & 100000000000018810   & 2623557157654233   & 2623557157055978   & -598255\\
$10^{18}$ & 1414213562 & 1000000000179470703  & 24739954287740860  & 24739954284239496  & -3501364\\
$10^{19}$ & 4472135955 & 10000000002237948990 & 234057667276344607 & 234057667300228928 & 23884321\\
\bottomrule
\end{tabular}}
\caption{Comparison of the Riemann approximation $R(x)$ with exact values of $\pi(x)$.}
\label{tab:R-values}
\end{table}

\begin{table}[p]
\centering
\small
\resizebox{\textwidth}{!}{%
\begin{tabular}{rrrrrrr}
\toprule
$x$ & $n_x$ & $T_{n_x}$ & $\pi(x)$ & $\widetilde S_T(x)$ & $\widetilde S_T(x)-\pi(x)$ & $d_T(x)$\\
\midrule
$10^1$  & 4          & 10                   & 4                  & 4.00                  & 0.00          & 0.0009074370\\
$10^2$  & 14         & 105                  & 25                 & 26.31                 & 1.31          & 0.0524203676\\
$10^3$  & 45         & 1035                 & 168                & 170.47                & 2.47          & 0.0147026400\\
$10^4$  & 141        & 10011                & 1229               & 1231.33               & 2.33          & 0.0018945285\\
$10^5$  & 447        & 100128               & 9592               & 9595.61               & 3.61          & 0.0003765580\\
$10^6$  & 1414       & 1000405              & 78498              & 78542.56              & 44.56         & 0.0005677041\\
$10^7$  & 4472       & 10001628             & 664579             & 664696.27             & 117.27        & 0.0001764534\\
$10^8$  & 14142      & 100005153            & 5761455            & 5761608.81            & 153.81        & 0.0000266961\\
$10^9$  & 44721      & 1000006281           & 50847534           & 50847572.97           & 38.97         & 0.0000007665\\
$10^{10}$ & 141421     & 10000020331          & 455052511          & 455050936.97          & -1574.03      & -0.0000034590\\
$10^{11}$ & 447214     & 100000404505         & 4118054813         & 4118053065.92         & -1747.08      & -0.0000004242\\
$10^{12}$ & 1414214    & 1000001326005        & 37607912018        & 37607911880.95        & -137.05       & -0.0000000036\\
$10^{13}$ & 4472136    & 10000002437316       & 346065536839       & 346065534317.15       & -2521.85      & -0.0000000073\\
$10^{14}$ & 14142136   & 100000012392316      & 3204941750802      & 3204941739751.26      & -11050.74     & -0.0000000034\\
$10^{15}$ & 44721360   & 1000000042485480     & 29844570422669     & 29844570516879.47     & 94210.47      & 0.0000000032\\
$10^{16}$ & 141421356  & 10000000037150046    & 279238341033925    & 279238341416321.25    & 382396.24     & 0.0000000014\\
$10^{17}$ & 447213595  & 100000000000018810   & 2623557157654233   & 2623557157204752.50   & -449480.39    & -0.0000000002\\
$10^{18}$ & 1414213562 & 1000000000179470703  & 24739954287740860  & 24739954284645952.00  & -3094906.72   & -0.0000000001\\
$10^{19}$ & 4472135955 & 10000000002237948990 & 234057667276344607 & 234057667301354400.00 & 25009789.37   & 0.0000000001\\
\bottomrule
\end{tabular}}
\caption{Updated values of the triangular S--S estimator $\widetilde S_T=S_{1,0.7071}$, its signed error, and its signed relative error $d_T$. The displayed values of $\widetilde S_T$ and $\widetilde S_T-\pi$ are rounded to two decimal places; $d_T$ is computed from unrounded values.}
\label{tab:estimator-values}
\end{table}

\begin{figure}[H]
\centering
\IfFileExists{comparison_ST_R_v7.png}{%
  \includegraphics[width=0.82\textwidth]{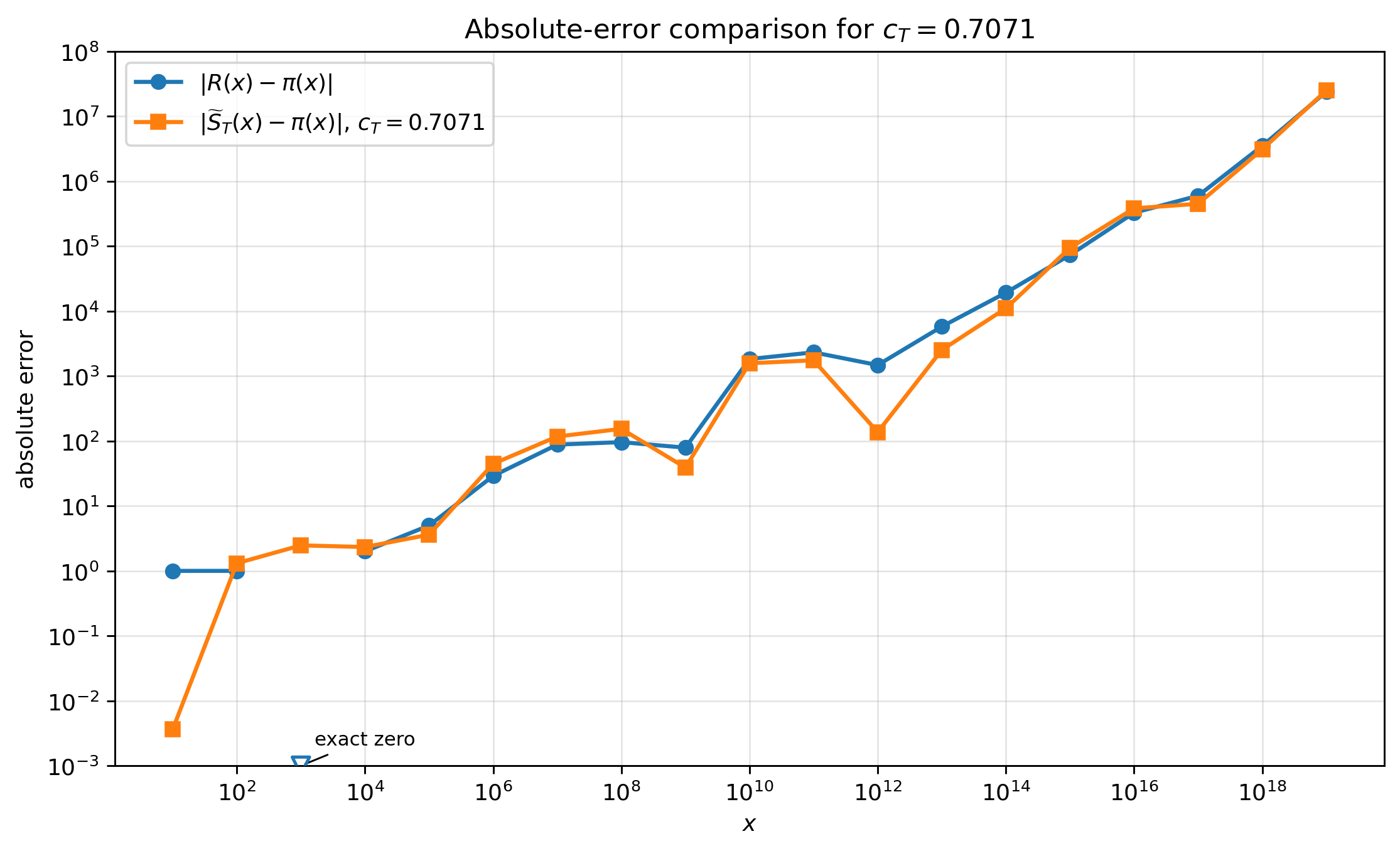}
}{%
  \fbox{\parbox{0.78\textwidth}{\centering
  The source image \texttt{comparison\_ST\_R\_v7.png} was not supplied with the LaTeX file.\\
  Regenerate the v7 comparison plot before final submission.}}
}
\caption{Absolute errors of $\widetilde S_T=S_{1,0.7071}$ and the Riemann approximation at $x=10^k$, $1\leq k\leq19$. Both axes are logarithmic. Only major grid lines are drawn to avoid visual crowding. The exact zero of $R(x)-\pi(x)$ at $x=10^3$ is marked at the lower plotting boundary; for $x=10$, the full-precision nonzero S--S error is plotted although the table rounds it to $0.00$.}
\label{fig:comparison}
\end{figure}

The tables show that neither approximation dominates at every tested point. For the 19 reported powers of ten, $\lvert\widetilde S_T(x)-\pi(x)\rvert$ is smaller than $\lvert R(x)-\pi(x)\rvert$ at 10 points and larger at 9 points. Their absolute errors therefore remain of comparable scale over the tested range. This finite-range observation is consistent with, but stronger than, the main-term asymptotic relation in \cref{prop:R-equivalence}; asymptotic equivalence alone does not control the ratio of the two error terms.

\section{Discussion}\label{sec:discussion}

The analysis separates five issues that were conflated in the earlier experimental formulation.

First, the correction coefficient can be calibrated numerically, and previous computations indicate an effective region near $0.7$. The updated table fixes $c_T=0.7071$ and reports the resulting values. These data demonstrate the performance of that specified coefficient, but they do not by themselves establish that $0.7071$ uniquely minimizes a finite-sample loss function; such a claim would require an explicit calibration criterion and a comparison over nearby values of $c$.

Second, the theoretical coefficient is determined by a precise asymptotic criterion. The natural scale satisfies
\[
q_\ell(x)\sim\frac{\sqrt{2x}}{\log x},
\]
whereas the classical normalized prime-counting error is scaled by $\sqrt{x}/\log x$. Their ratio produces $1/\sqrt2$. Under RH, \cref{thm:coefficient-optimality} shows that this is the unique coefficient that makes the normalized corrected error have zero logarithmic mean.

Third, the accumulated discretization bias is not inferred merely from the nonnegativity of local errors. The two-sided cell estimate and the slowly varying summation lemma yield
\[
D_\ell(x)=\frac{\sqrt{2\ell x}}{\log^2x}
 +\bigO_\ell\!\left(\frac{\sqrt{x}}{\log^3x}\right),
\]
which is the key deterministic term in the transfer theorem.

Fourth, the relation with $R(x)$ has two distinct aspects. Analytically, $S_{\ell,c}(x)$ and $R(x)$ are asymptotically equivalent at the level of the principal term. Computationally, a direct evaluation of $R(x)$ is cheaper for one isolated large argument, while the S--S recurrence has lower marginal complexity for a sequence of consecutive partition nodes because it reuses the accumulated value from the preceding step.

Fifth, the RH-equivalence statement must be separated from the proof developed in this article. The unconditional transfer between $Z_\ell$ and $E$ is proved here. The implication from RH follows from the limiting-distribution literature. The converse direction is cited from the companion manuscript \cite{SamojlukSiemaszkoConverse} and is intentionally not reproved in the present paper.

\section{Conclusions}\label{sec:conclusions}

We developed a rigorous asymptotic theory for the Samojluk--Siemaszko estimator based on generalized triangular partitions. The estimator has cold-start complexity $\bigO_\ell(\sqrt{x})$ and, when evaluated sequentially along partition nodes, amortized $\bigO(1)$ marginal update cost. For every fixed correction coefficient, it is asymptotically equivalent to both $\pi(x)$ and the Riemann approximation $R(x)$ in the main-term sense.

The correction is best viewed as a family $S_{\ell,c}$. Numerical experiments indicate an effective finite-range region near $0.7$, and the updated calculations use $c_T=0.7071$. The theoretical analysis gives a sharper statement: the unconditional transfer formula
\[
Z_\ell(x)=\frac1{\sqrt2}E(x)-\frac{\sqrt\ell}{\log x}
 +\bigO_\ell\!\left(\frac1{\log^2x}\right)
\]
identifies $1/\sqrt2$ as the structural coefficient. Under RH it is the unique fixed value that removes the asymptotic logarithmic-mean bias of the normalized corrected estimator.

The decimal coefficient used in the computations and the exact theoretical coefficient agree to four decimal places, but their roles remain distinct: $c_T$ specifies the numerical experiment, while $1/\sqrt2$ is selected by the asymptotic centering theorem. Future work should optimize $c$ under explicit loss functions on dense data rather than only powers of ten and analyze higher-order terms in $D_\ell(x)$. The converse logarithmic-mean criterion for RH is developed separately in \cite{SamojlukSiemaszkoConverse}.


\begin{thebibliography}{99}

\bibitem{AkbaryNgShahabi}
A.~Akbary, N.~Ng, and M.~Shahabi,
\newblock Limiting distributions of the classical error terms of prime number theory,
\newblock \emph{Quarterly Journal of Mathematics} \textbf{65} (2014), 743--780,
\newblock doi:10.1093/qmath/hat059.

\bibitem{BHP}
R.~C. Baker, G.~Harman, and J.~Pintz,
\newblock The difference between consecutive primes. II,
\newblock \emph{Proceedings of the London Mathematical Society} \textbf{83} (2001), 532--562.


\bibitem{HeathBrown}
D.~R. Heath-Brown,
\newblock The number of primes in a short interval,
\newblock \emph{Journal f\"ur die reine und angewandte Mathematik} \textbf{389} (1988), 22--63.

\bibitem{Hoheisel}
G.~Hoheisel,
\newblock Primzahlprobleme in der Analysis,
\newblock \emph{Sitzungsberichte der Preussischen Akademie der Wissenschaften, Physikalisch-Mathematische Klasse} (1930), 580--588.

\bibitem{Huxley}
M.~N. Huxley,
\newblock On the difference between consecutive primes,
\newblock \emph{Inventiones Mathematicae} \textbf{15} (1972), 164--170.

\bibitem{OliverSoundararajan}
R.~J. Lemke Oliver and K.~Soundararajan,
\newblock Unexpected biases in the distribution of consecutive primes,
\newblock \emph{Proceedings of the National Academy of Sciences} \textbf{113} (2016), E4446--E4454.


\bibitem{SamojlukSiemaszkoConverse}
A.~Samojluk and A.~Siemaszko,
\newblock \emph{A Converse to Wintner's Theorem},
\newblock manuscript in preparation, 2026.

\bibitem{Titchmarsh}
E.~C. Titchmarsh,
\newblock \emph{The Theory of the Riemann Zeta-Function},
\newblock 2nd ed., revised by D.~R. Heath-Brown, Oxford University Press, 1986.

\bibitem{Wintner}
A.~Wintner,
\newblock On the asymptotic distribution of the remainder term of the prime-number theorem,
\newblock \emph{American Journal of Mathematics} \textbf{57} (1935), 534--538,
\newblock doi:10.2307/2371183.

\end{thebibliography}
\end{document}